\documentclass[12pt]{amsart}

\usepackage{amssymb}  
\usepackage{amsxtra}    
\usepackage{graphicx,color}
\usepackage[mathscr]{eucal}

 \RequirePackage{geometry}
 \geometry{twoside,
  paperwidth=210mm,
  paperheight=297mm,
  textheight=622pt,
  textwidth=448pt,
 centering,
  headheight=50pt,
  headsep=12pt,
  footskip=18pt,
  footnotesep=24pt plus 2pt minus 12pt,
  columnsep=2pc
}




\usepackage{verbatim} 

\allowdisplaybreaks


\newcommand{\al}{\alpha}
\newcommand{\be}{\beta}
\newcommand{\ga}{\gamma}

\newcommand{\de}{\delta}
\newcommand{\De}{\Delta}

\newcommand{\ve}{\varepsilon}

\newcommand{\si}{\sigma}

\newcommand{\vp}{\varphi}


\newcommand{\rn}{\mathbb R^n}


\newcommand{\lp}{L^{p}}

\newcommand{\li}{L^{\infty}}


\newcommand{\wt}{\widetilde}
\newcommand{\wh}{\widehat}


\newcommand{\p}{\partial}
\newcommand{\real}{\textup{Re}\, }

\newcommand{\f}{\frac}

\newcommand{\tf}{\tfrac}

\newcommand{\nf}{\infty}
\newcommand{\q}{\quad}

%
\newcommand{\set}[1]{\left\{#1\right\}}
\newcommand{\abs}[1]{\left\vert #1\right\vert}%
\newcommand{\norm}[1]{\left\Vert #1\right\Vert}%
\newcommand{\trinorm}[1]{\left\|\kern-1pt\left|#1\right|\kern-1pt\right\|}%

\DeclareMathOperator{\supp}{\mathrm{supp}}


\theoremstyle{definition}
\newtheorem{defn}{Definition}[section]

\theoremstyle{theorem}
\newtheorem{thm}[defn]{Theorem}
\newtheorem{prop}[defn]{Proposition}

\newtheorem{lem}[defn]{Lemma}
\theoremstyle{remark}

\newcommand{\bbr}{\mathbb R}
\newcommand{\qq}{\qquad}

\newcommand{\ep}{\ve}
\newcommand{\beq}{\begin{equation}}
\newcommand{\eeq}{\end{equation}}

\begin{document}

\title[H\"ormander Multiplier Theorem]{The H\"ormander Multiplier Theorem I: The Linear Case Revisited}

\author[Grafakos]{Loukas Grafakos}
\address{Department of Mathematics, University of Missouri, Columbia, MO 65211, USA}
\email{grafakosl@missouri.edu}

\author[He]{Danqing He}
\address{Department of Mathematics, 
Sun Yat-sen (Zhongshan) University, 
Guangzhou, 510275, 
P.R. China}
\email{hedanqing35@gmail.com}

\author[Honzik]{Petr Honzik}
\address{Faculty of Mathematics and Physics, Charles University in Prague,
Ke Karlovu 3,
121 16 Praha 2, Czech Republic}
\email{honzik@gmail.com}

\author[Nguyen]{Hanh Van Nguyen}
\address{Department of Mathematics,
The University of Alabama, 
Box 870350,
Tuscaloosa, AL 35487-0350}
\email{hanhnguyenvan@gmail.com}
\thanks{2010 MSC: 42B15, 42B30. Keywords:  Interpolation}
\thanks{The first author was supported by the Simons Foundation. The third author
 was supported by the ERC CZ grant LL1203 of the Czech Ministry of Education}
\thanks{2010 Mathematics Classification Number 42B20, 42B99}

\begin{abstract}
 We discuss
$L^p(\rn)$ boundedness for Fourier multiplier operators that satisfy the hypotheses of the
H\"ormander multiplier theorem in terms of
 an optimal condition that relates the distance $|\f 1p-\f12|$ to the smoothness $s$ of
the associated multiplier measured in some Sobolev norm. We provide new counterexamples to justify the 
optimality of the condition $|\f 1p-\f12|<\frac sn$ and we   discuss  the 
endpoint case $|\f 1p-\f12|=\f sn$.
\end{abstract}

\maketitle

\section{Introduction}

To a bounded function $\si$ on $\rn$ we associate a linear multiplier operator
$$
T_\si(f)(x) = \int_{\rn} \wh{f}(\xi) \si(\xi) e^{2\pi i x\cdot \xi}d\xi
$$
where $f$ is a Schwartz function on $\rn$ and
$\wh{f}(\xi) = \int_{\rn} f(x)   e^{-2\pi i x\cdot \xi}dx$ is its Fourier transform. The classical theorem of
Mihklin~\cite{Mikhlin} states that if the condition
\begin{equation}\label{10}
|\partial^\alpha \si(\xi)|\leq C_\alpha |\xi|^{-| \alpha|}
\end{equation}
holds for all { multi-indices} $\al$ with size $|\al | \le [n/2]+1$, then $T_\si$ admits a bounded extension from
$L^p(\rn)$ to itself for all $1<p<\nf$.

Mikhlin's theorem was extended by H\"ormander~\cite{Hoe} to multipliers with fractional derivatives in some $L^r$ space.
To precisely describe this extension, let $\De$ be the Laplacian, let $(I-\De)^{s/2} $ denote the operator given on the Fourier transform by multiplication by
$(1+4\pi^2 |\xi|^2)^{s/2}$ and  for $s>0$, and
let $L^r_s$ be the standard  Sobolev  space of all functions $h$ on $\rn$
with norm
$$
\|h\|_{L^r_\gamma}:=\|(I-\Delta)^{s/2} h \|_{L^r} <\nf.
$$
Let
$\Psi$ be a Schwartz function whose Fourier transform is supported in the annulus of the form
$\{\xi: 1/2< |\xi|< 2\}$ which satisfies $\sum_{j\in \mathbb Z} \wh{\Psi}(2^{-j}\xi)=1$ for all $\xi\neq 0$.

H\"ormander's extension of Mikhlin's theorem says that if    $1< r\le 2$ and  $s>n/r$,  a bounded function $\si$ satisfies
\begin{equation}\label{2}
\sup_{k\in \mathbb Z} \big\|\wh{\Psi}\si (2^k \cdot)\big\|_{L^r_s}<\infty,
\end{equation}
 i.e., $\si$ is uniformly (over all dyadic annuli) in the Sobolev space $L^r_s$,   
then $T_\si$ admits a bounded extension from
$L^p(\rn)$ to itself for all $1<p<\nf$, and is also of weak type $(1,1)$.  An endpoint result for this
multiplier theorem involving a Besov space  was given by Seeger~\cite{Seeger}.
  The least number of derivatives  imposed on  the multiplier in H\"ormander's  condition~\eqref{2}
is when $r=2$. In this case, under the assumption of  $n/2+\ve$ derivatives in $L^2$
uniformly (over all dyadic annuli),  we obtain  boundedness of $T_\si$ on $L^p(\rn)$
for all $p \in (1,\nf)$.    It is natural to ask
whether $L^p$ boundedness holds for some   $p$  if $s<n/2$.

Calder\'on and Torchinsky~\cite{CT} used an interpolation technique to prove  that if \eqref{2} holds,
then   the multiplier operator $T_\si$ is bounded from $L^p(\bbr^n)$ to itself whenever $p$ satisfies
\beq\label{1c}
\Big| \f 1p -\f 12 \Big| <\f sn
 \eeq
 and
\beq\label{2c}
 \Big| \f 1p -\f 12 \Big| \le \f 1r\, .
 \eeq

It is not hard to verify  that if $\si$ satisfies \eqref{2} and $T_\si$ is bounded from $L^p(\rn)$ to itself, then we must
necessarily have $rs\ge n$; see Proposition~\ref{Nec}. Thus  $\f 1r\le \f sn$ and comparing conditions  \eqref{1c} and  \eqref{2c}
we notice that \eqref{2c}  restricts \eqref{1c}. On the other hand, if we only have conditions \eqref{2} and \eqref{1c} for some $r,s$ with $rs>n$, $r\in (1,\nf)$, $s\in (0,\nf)$,
then one can find an $r_o$ such that $|\f 1p -\f 12|\le \f1{r_o}<\f sn$ and
$r_o<r$. In view  of standard embeddings between Sobolev spaces\footnote{This could be proved via the Kato-Ponce inequality  $\|FG\|_{L^q_s}\le C \|F\|_{L^{q_1}_s}
\|G\|_{L^{q_2}_s} $,  $1/q=1/q_1+1/q_2$  with $q=r_o$ and $q_1=r$; see~\cite{KP}, \cite{GO}.} we obtain that
\begin{equation}\label{nj}
\sup_{k\in \mathbb Z} \big\|\wh{\Psi}\si (2^k \cdot)\big\|_{L^{r_o}_s}\le C\,
\sup_{k\in \mathbb Z} \big\|\wh{\Psi}\si (2^k \cdot)\big\|_{L^{r}_s} <\nf\, ,
\end{equation}
and thus we can deduce the boundedness of $T_\si$ on $L^p(\rn)$ by the
aforementioned Calder\'on and Torchinsky~\cite{CT} result using the space $L^{r_o}_s$. So  assumption \eqref{2c} is not necessary.

In this note we provide a   self-contained proof of the
  $L^p$ boundedness of $T_\si$ only under assumption \eqref{1c}. Moreover, we
show that \eqref{1c} is optimal in the sense that within the class of multipliers $\si$
for which \eqref{2} holds,  if $T_\si$ is bounded from $L^p$ to itself, then
we must necessarily have  $|\f 1p -\f 12|\le \f sn$.   Theorem~\ref{ThmMain}
is mostly folklore, and could be proved via the interpolation result of Connett and Schwartz \cite{CS}, but
here we provide a ``bare hands" proof. The counterexamples we supply
(Section 4)    seem to be new.

\begin{thm}\label{ThmMain}
  Fix $1< r<\nf $ and $0<s\le \f n2$ such that $rs>n$. Assume that \eqref{2} holds.
Then   $T_\sigma$ maps $L^p(\mathbb R^n)$ to $L^p(\mathbb R^n) $ for all $p\in (1,\infty)$ such that
$\big|{\frac1p-\frac12}\big|<\frac{s}{n} $. Moreover, if  $T_\si$ is  bounded from $L^p(\rn)$ to itself for all $\si$
such that  \eqref{2} holds, then we must have $\big|{\frac1p-\frac12}\big|\le \frac{s}{n} $. 
\end{thm}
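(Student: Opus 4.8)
The plan is to prove the two assertions separately: the positive direction is the (essentially folklore) Calder\'on--Torchinsky argument, and the optimality comes from an explicit family of oscillatory multipliers.

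\emph{Sufficiency.} By duality it suffices to treat $1<p<2$, so $0<\f1p-\f12<\f sn$; note also that $rs>n$ and $s\le\f n2$ force $r>2$. I would start from a single-scale kernel estimate. Write $\mu_j(\eta)=\wh\Psi(\eta)\,\si(2^j\eta)$, supported in $\{\f12<|\eta|<2\}$ with $\|\mu_j\|_{L^r_s}\le A:=\sup_k\|\wh\Psi\,\si(2^k\cdot)\|_{L^r_s}$. Multiplying by a fixed smooth cutoff equal to $1$ on the annulus and using the Kato--Ponce inequality as in the footnote (with $\f12=\f1{q_1}+\f1r$) gives $\|\mu_j\|_{L^2_s}\lesssim A$, whence by Plancherel the kernel $L_j=\mu_j^{\vee}$ satisfies $\|(1+|y|)^{s}L_j\|_{L^2}\lesssim A$; moreover $L^r_s\hookrightarrow L^\infty$ (valid as $rs>n$) gives $\|\si\|_{L^\infty}\lesssim A$, so $T_\si$ is bounded on $L^2(\rn)$. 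To reach $L^p$ I would decompose $T_\si f=\sum_j T_{\si_j}f$ with $\si_j=\si\,\wh\Psi(2^{-j}\cdot)$, invoke Littlewood--Paley theory to reduce to the vector-valued bound $\bigl\|(\sum_j|T_{\si_j}h_j|^2)^{1/2}\bigr\|_{L^p}\lesssim\bigl\|(\sum_j|h_j|^2)^{1/2}\bigr\|_{L^p}$ for the diagonal operator, and prove this by complex interpolation of an analytic family of operators between the trivial bound on $L^2(\ell^2)$ and a weighted $L^2$ bound into which the $s$ derivatives of $\si$ have been absorbed --- equivalently, via the change-of-measure interpolation theorem of Connett--Schwartz referred to in the excerpt. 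The only role of the hypothesis $\f1p-\f12<\f sn$ is to make the sum over dyadic scales converge; one never touches the endpoint. This interpolation is the crux, because for $s\le\f n2$ the single-scale kernel lies only in a weighted $L^2$ and not in $L^1$, so no pointwise/maximal-function majorization is available.

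\emph{Necessity.} Assume $\bigl|\f1p-\f12\bigr|>\f sn$; since the $\si$ we build will be real (hence $T_\si$ self-adjoint) it suffices to produce one such $\si$ obeying \eqref{2} with $T_\si$ unbounded on some $L^p$, $p<2$, the case $p>2$ following by passing to $p'$. Fix a nonzero real bump $\phi$ with $\supp\phi\subset\{\f12<|\xi|<2\}$, integers $\ell_1<\ell_2<\cdots$ with gaps $\ge10$, and $N_m\to\nf$. For each $m$ choose signs $\ep^{(m)}_k=\pm1$, $|k|_\infty\le N_m$ --- by a Rudin--Shapiro construction, or generically at random --- so that the periodic polynomial $P_m(\eta)=\sum_{|k|_\infty\le N_m}\ep^{(m)}_k e^{2\pi i k\cdot\eta}$ has $\|P_m\|_{C^s([0,1]^n)}\lesssim N_m^{\,s+n/2}$, and put
\[
\si(\xi)=\sum_{m\ge1}c_m^{-1}\,\phi(2^{-\ell_m}\xi)\,P_m(2^{-\ell_m}\xi),\qquad c_m:=C_0\,N_m^{\,s+n/2},
\]
with $C_0$ a large constant (symmetrize $k\leftrightarrow-k$ to make $\si$ real). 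Since the blocks $\{2^{\ell_m-1}<|\xi|<2^{\ell_m+1}\}$ are disjoint, for each $\ell$ at most one term contributes to $\wh\Psi(\eta)\,\si(2^\ell\eta)$; a bounded dilation and multiplication by the fixed Schwartz bump $\wh\Psi$ cost only a constant in $L^r_s$, and $L^\infty_s\hookrightarrow L^r_s$ on a fixed compact set, so $\sup_\ell\|\wh\Psi\,\si(2^\ell\cdot)\|_{L^r_s}\lesssim\sup_m c_m^{-1}\|P_m\|_{C^s}\le C_0^{-1}\le1$, i.e. \eqref{2} holds (and $\|\si\|_{L^\infty}\lesssim1$).

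\emph{The blow-up.} Test $T_\si$ on $f_m$ with $\wh{f_m}(\xi)=\psi(2^{-\ell_m}\xi)$, $\psi$ a bump equal to $1$ on $\supp\phi$. Frequency-disjointness gives $T_\si f_m=T_{\si^{(m)}}f_m$ with $\si^{(m)}=c_m^{-1}\phi(2^{-\ell_m}\cdot)P_m(2^{-\ell_m}\cdot)$, and a direct computation gives $T_\si f_m(x)=c_m^{-1}2^{\ell_m n}G_m(2^{\ell_m}x)$ with $G_m(u)=\sum_{|k|_\infty\le N_m}\ep^{(m)}_k\check\phi(u+k)$, while $f_m(x)=2^{\ell_m n}\check\psi(2^{\ell_m}x)$; hence $\|T_\si f_m\|_{L^p}/\|f_m\|_{L^p}\sim c_m^{-1}\|G_m\|_{L^p}/\|\check\psi\|_{L^p}$. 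The Schwartz decay of $\check\phi$ gives $\|G_m\|_{L^\infty}\lesssim1$ and that $G_m$ is essentially supported in a cube of side $\sim N_m$, while $\wh{G_m}=\phi\,P_m$ gives, by Plancherel, $\|G_m\|_{L^2}^2=\int|\phi|^2|P_m|^2\gtrsim N_m^{\,n}$ (the diagonal terms dominate the off-diagonal ones, which is where the choice of signs is used); interpolating these, $\|G_m\|_{L^p}\gtrsim N_m^{\,n/p}$ for all $p\in[1,\nf]$. Therefore
\[
\frac{\|T_\si f_m\|_{L^p}}{\|f_m\|_{L^p}}\ \gtrsim\ C_0^{-1}\,\frac{N_m^{\,n/p}}{N_m^{\,s+n/2}}\ =\ C_0^{-1}\,N_m^{\,n(1/p-1/2)-s}\ \longrightarrow\ \nf,
\]
since $n(\f1p-\f12)-s>0$; thus $T_\si$ is unbounded on $L^p$, and by self-adjointness on every $L^p$ with $\bigl|\f1p-\f12\bigr|>\f sn$. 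The main obstacle for this half is balancing the two constraints on the blocks --- keeping $\|P_m\|_{C^s}=O(N_m^{s+n/2})$, forced by \eqref{2}, against needing $\|G_m\|_{L^p}\gtrsim N_m^{n/p}$ --- which the Rudin--Shapiro (or random) signs reconcile exactly, the gap $n/p-(s+n/2)=n(\f1p-\f12)-s$ between the two exponents turning positive precisely on the range $\bigl|\f1p-\f12\bigr|>\f sn$.
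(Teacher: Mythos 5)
Your positive half stops exactly where the difficulty lies. The reductions you do carry out are fine ($\|\si\|_{L^\infty}\lesssim A$ by Sobolev embedding, the weighted kernel bound $\|(1+|y|)^{s}L_j\|_{L^2}\lesssim A$ via Kato--Ponce and Plancherel, and the Littlewood--Paley reduction to the diagonal vector-valued inequality), but the step you yourself call the crux is only named, not proved: ``complex interpolation between the trivial bound on $L^2(\ell^2)$ and a weighted $L^2$ bound'' cannot, as stated, produce an $L^p$ estimate --- interpolating two $L^2$-based bounds (by Stein--Weiss change of measure or by an analytic family) keeps you at exponent $2$ with an intermediate weight. To leave $p=2$ you need either (a) the dual-weight route for $p>2$: a single-scale inequality of the shape $\int|T_{\si_j}h|^2u\,dx\lesssim A^2\int|h|^2\big(M(u^{t})\big)^{1/t}dx$ with $t\approx n/(2s)$, followed by duality with $u\in L^{(p/2)'}$ and the maximal theorem, which is where $|\f1p-\f12|<\f sn$ actually enters (through $(p/2)'>n/(2s)$), not through ``convergence of the sum over dyadic scales''; for $s\le n/2$ this single-scale inequality is precisely what your weighted $L^2$ kernel bound does not yet yield, as you note no maximal majorization is available; or (b) an interpolation theorem for the multiplier conditions themselves, which is the paper's route: Theorem~\ref{interpL} is proved by hand (analytic family $\si_z$, Lemma~\ref{lem:016}, Hirschman's lemma) and then applied between two concrete endpoint estimates, the classical H\"ormander bound with $r_0=2$, $s_0=\f n2+\ve$ at $p_0=1+\de$ and the trivial $L^2$ bound with $r_1=n/\ve$, $s_1=\ve+\ve^2$. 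You never state either endpoint multiplier estimate (the classical $s>n/2$ theorem near $L^1$ is nowhere used), never define the analytic family, and never derive the arithmetic giving the range $|\f1p-\f12|<\f sn$; appealing to Connett--Schwartz or Calder\'on--Torchinsky wholesale would make the claim folklore (as the paper concedes) but leaves nothing of your argument to check. So the sufficiency half has a genuine gap.

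The necessity half, by contrast, is essentially correct and genuinely different from the paper's: Proposition~\ref{Nec} works with a Rademacher family $\si_{N,t}$ under the uniform bound \eqref{MKI} (plus Lemma~\ref{Sob} and Khintchine), whereas you glue rescaled blocks at lacunary scales into a single multiplier, proving the contrapositive of the second assertion directly, with no uniform-boundedness step; the exponent count $c_m^{-1}N_m^{n/p}\approx N_m^{n(\f1p-\f12)-s}\to\nf$ is right. Two details need repair. First, the passage from $\|P_m\|_{C^s}\lesssim N_m^{s+n/2}$ to $\sup_\ell\|\wh\Psi\,\si(2^\ell\cdot)\|_{L^r_s}\lesssim 1$ is not an embedding statement (for fractional $s$, $C^s=B^{s}_{\infty,\infty}$ does not embed locally into $L^r_s=F^s_{r,2}$ at equal smoothness); replace it by a Bernstein-type estimate --- $\wh\Psi\phi P_m$ is essentially band-limited to $|\xi|\lesssim N_m$, so $\|(I-\De)^{s/2}(\wh\Psi\phi P_m)\|_{L^r}\lesssim N_m^{s}\|P_m\|_{L^\infty}\lesssim N_m^{s+n/2}$ --- or interpolate integer-order norms as in Lemma~\ref{Sob}. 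Second, the lower bound $\|G_m\|_{L^2}^2\gtrsim N_m^{n}$ is not automatic ``diagonal dominance'' for a fixed bump $\phi$ and deterministic signs, since the off-diagonal sum $\sum_{k\ne k'}\ep_k\ep_{k'}\rho(k-k')$ with $\rho=(|\phi|^2)^{\vee}$ need not be $o(N_m^n)$; either randomize the signs and argue in expectation via Khintchine (as the paper does), or rescale the frequency lattice by a fixed constant so that $\{|\phi|\ge1\}$ contains a full period cell $Q$ and use periodicity to get $\int|\phi P_m|^2\ge\int_Q|P_m|^2=(2N_m+1)^n$. With these repairs your counterexample is a valid, and in packaging stronger, alternative to the paper's Section 4.
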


We note that  the strict inequality in condition $rs>n$ is necessary as there exist unbounded functions in $L^r_{n/r}(\rn)$,  while  multipliers are always in $\li$.

On the critical case $\big|\f1p-\f12 \big|= \f sn$, $1\le p<2$, there are  positive results   for $1<p<2$ (see Seeger~\cite{Seeger2})   and  for $p=1$ by Seeger~\cite{Seeger}. In Section 5 we discuss a direct way to relate the  results in the cases $p=1$ and $1<p<2$ via direct interpolation  that yields the   following result as a consequence of  the main theorem in \cite{Seeger}:

\begin{prop}[\cite{Seeger2}]\label{KKLLMM}
Given $0\le s\le \f n2$, $1< p< 2$ satisfy   $\big|\f1p-\f12 \big|= \f sn$, then we have 
$$
\big\| T_\si \big\|_{L^p\to L^{p,2}} \le C\, \sup_{k\in \mathbb Z}  \| \si (2^k \,\cdot\,) \wh\Psi \|_{B_{ \f ns}^{s,1}}\, .
$$
\end{prop}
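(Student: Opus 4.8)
\textbf{Proof plan for Proposition~\ref{KKLLMM}.}
The plan is to derive this sharp restricted-strong-type estimate at the endpoint $|\f1p-\f12|=\f sn$ by interpolating, with a change of measure, between the two endpoint results already available for the Besov-type condition: the weak type $(1,1)$ bound of Seeger~\cite{Seeger}, valid under the hypothesis $\sup_k\|\si(2^k\cdot)\wh\Psi\|_{B^{n/2,1}_2}<\nf$ (the case $s=n/2$, $p=1$), and the trivial $L^2\to L^2$ bound, which holds whenever $\si\in\li$ and in particular under any of our hypotheses. The subtlety is that the Besov index in the hypothesis of Proposition~\ref{KKLLMM}, namely $B^{s,1}_{n/s}$, varies with $s$, so one cannot simply interpolate a \emph{fixed} operator between two fixed spaces; instead one fixes the underlying multiplier $\si$ and builds an analytic family $\si_z$ interpolating the relevant Sobolev/Besov smoothness, then applies Stein's analytic interpolation theorem.

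First I would set up the analytic family. Write $\f1p=\f12+\f sn$ with $0<s\le\f n2$ (the case $p=1$ is literally Seeger's theorem, so assume $1<p<2$). For $z$ in the closed strip $0\le\real z\le1$, define
\[
\si_z=(I-\De)^{-\,\f{n}{2}\,(1-z)/r(z)}\si,
\]
with the exponents arranged so that at $\real z=0$ the family lands in the Besov space $B^{n/2,1}_2$-type hypothesis of Seeger (giving weak type $(1,1)$, equivalently a bound into $L^{1,\infty}$), while at $\real z=1$ one recovers $\si_1=\si$ together with the $L^2\to L^2$ bound; the precise choice of the exponent function and of the modulating factors $e^{z^2}$ (to control growth on vertical lines) is routine bookkeeping. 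One checks, using that convolution with the kernel of a negative-order Bessel potential is an $L^r$-bounded (indeed $L^r_s$-preserving) operation and using the Besov–Sobolev embeddings implicit in footnote, that each $\si_z$ on each dyadic annulus has the Besov/Sobolev norm controlled, uniformly for $z$ on the two boundary lines, by $\sup_k\|\si(2^k\cdot)\wh\Psi\|_{B^{s,1}_{n/s}}$.

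Next I would record the two boundary estimates. On $\real z=0$: $\|T_{\si_{iy}}\|_{L^1\to L^{1,\infty}}\le C(1+|y|)^N\sup_k\|\si(2^k\cdot)\wh\Psi\|_{B^{s,1}_{n/s}}$, which is exactly Seeger's endpoint theorem in~\cite{Seeger} applied to $\si_{iy}$ (whose annular Besov norm in $B^{n/2,1}_2$ is controlled as above). On $\real z=1$: $\|T_{\si_{1+iy}}\|_{L^2\to L^2}=\|\si_{1+iy}\|_{\li}\le C(1+|y|)^N\sup_k\|\si(2^k\cdot)\wh\Psi\|_{B^{s,1}_{n/s}}$, using the elementary embedding of the annular Besov space into $\li$. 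Then Stein's interpolation theorem for analytic families — in the version valid for sublinear operators and Lorentz-space targets, or applied to the linearizations $\langle T_{\si_z}f,g\rangle$ — yields, at the value $\theta$ with $\f1p=(1-\theta)\cdot1+\theta\cdot\f12$ (so $\theta=2-\f2p=\f{2s}{n}$, hence $\real z=\theta$ corresponds precisely to $s$), a bound
\[
\|T_\si f\|_{L^{p,q}}\le C\,\sup_{k\in\zzz}\|\si(2^k\cdot)\wh\Psi\|_{B^{s,1}_{n/s}}\,\|f\|_{L^p},
\]
and interpolating a weak-$(1,1)$ endpoint with a strong $L^2$ endpoint the Lorentz exponent on the target is $q=2$, giving the asserted $L^p\to L^{p,2}$ bound. (The case $2<p<\nf$ then follows by duality, $T_\si^*=T_{\overline{\si(-\cdot)}}$, but the statement as given only claims $1<p<2$.)

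The main obstacle is the bookkeeping in the first step: one must verify that the modulated Bessel-potential family $\si_z$ genuinely interpolates the \emph{annular} smoothness conditions — i.e. that $\sup_k\|\si_{iy}(2^k\cdot)\wh\Psi\|_{B^{n/2,1}_2}$ and $\sup_k\|\si_{1+iy}(2^k\cdot)\wh\Psi\|_{\li}$ are each dominated by $\sup_k\|\si(2^k\cdot)\wh\Psi\|_{B^{s,1}_{n/s}}$ with at most polynomial growth in $|y|$ — because the dilation $2^k\cdot$ does not commute with $(I-\De)^{w}$, so one has to replace $(I-\De)^w$ by its dyadically rescaled cousin $(I-4^{-k}\De)^{w}$ on the $k$-th annulus and track the $L^r$-boundedness of the resulting (imaginary-order) multipliers uniformly in $k$; this is the standard, but not entirely trivial, device of Calderón–Torchinsky-style interpolation. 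Once that uniform control is in place, the interpolation and duality steps are immediate.
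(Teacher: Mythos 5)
Your overall strategy (build an analytic family of multipliers joining Seeger's endpoint to the trivial $L^2$ bound and interpolate) is the same as the paper's, but the specific endpoint you choose creates a genuine gap. You take as the $\real z=0$ estimate the weak type $(1,1)$ bound, i.e.\ a map into $L^{1,\infty}$, and assert that interpolating this with the $L^2\to L^2$ bound yields the Lorentz index $2$ in the target. That is the statement of real (Marcinkiewicz-type) interpolation for a \emph{fixed} operator; here the operator cannot be fixed, since the hypothesis $B^{s,1}_{n/s}$ moves with $s$, so you are forced into complex/analytic interpolation of the family $T_{\si_z}$, and there the intermediate target is governed by the Calder\'on product: $(L^{1,\infty})^{1-\theta}(L^{2,2})^{\theta}=L^{p,q}$ with $\f1q=\f{\theta}{2}=\f1{p'}$, i.e.\ you only get $T_\si:L^p\to L^{p,p'}$ with $p'>2$, strictly weaker than the claimed $L^p\to L^{p,2}$ (and analytic interpolation with the non-normable, non-dualizable space $L^{1,\infty}$ as a target is itself problematic). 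The paper avoids this precisely by using Seeger's sharper endpoint \eqref{S1}, $\|T_\si\|_{H^1\to L^{1,2}}\lesssim \sup_k\|\si(2^k\cdot)\wh\Psi\|_{B_2^{n/2,1}}$, together with \eqref{S2}, and an interpolation theorem (Grafakos--Masty\l o, as invoked in Theorem~\ref{interpL2}) that accommodates Lorentz targets and the $H^1$ source; then $(L^{1,2})^{1-\theta}(L^{2,2})^{\theta}=L^{p,2}$ and $(H^1,L^2)_\theta=L^p$ give exactly the stated conclusion. So you must upgrade your $\real z=0$ endpoint from $L^1\to L^{1,\infty}$ to $H^1\to L^{1,2}$; with $L^{1,\infty}$ the argument cannot reach the asserted estimate.

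A second, smaller but substantive issue is that the "routine bookkeeping" for the family $\si_z=(I-\De)^{-w(z)}\si$ is where most of the work lies, and a pure Bessel-potential family does not suffice. The hypothesis at the center is $B^{s,1}_{n/s}$ while the endpoints require $B^{n/2,1}_{2}$ and $B^{0,1}_{\infty}$: both the smoothness \emph{and} the integrability index must be interpolated, and there is no embedding $B^{s,1}_{n/s}\hookrightarrow B^{n/2,1}_2$ (even for functions supported in a fixed annulus) since smoothness must be raised, not merely traded via H\"older; moreover the $\ell^1$ structure in the dyadic index must be preserved quantitatively on the boundary lines. The paper's construction in Theorem~\ref{interpL2} handles this by decomposing $\si^k\wh\Psi$ with Littlewood--Paley pieces $\De_j$, raising their moduli to the power $L(z)=\f{r}{r_0}(1-z)+\f{r}{r_1}z$ (interpolating the Lebesgue index, as in the Riesz--Thorin device), inserting the factors $2^{jM(z)}$ and the normalizing constants $(c_j^k)^{1-L(z)}$ (interpolating the smoothness and the $\ell^1$-Besov sum), and controlling the resulting boundary Besov norms through the almost-orthogonality estimates of Lemma~\ref{lemlast}. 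Your per-annulus rescaled potentials $(I-4^{-k}\De)^{w}$ address the dilation-noncommutativity point you raise, but not the change of integrability index or the $\ell^1$ normalization; these need an explicit construction of the above type rather than bookkeeping.
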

\noindent Here    $L^{p,2}$ denotes the Lorentz space of functions $f$ for which $t^{1/p} f^*(t)$ lies in $L^2((0,\infty), \f{dt}{t})$, where $f^*$ is the 
nondecreasing rearrangement of $f$;   
for the definition of the Besov space $B_{ \f ns}^{s,1}$ see Section 5. Other types   
 of endpoint results   involving
$L^p$ norms as opposed to $L^{p,2}$ norms were provided by Seeger~\cite{Seeger3}. 

\section{Complex interpolation}

This section contains an
interpolation result proved in a simpler way than that of Calder\'on and Torchinsky~\cite{CT}.  We  denote by $S$ the strip in the complex plane with $0<\Re(z)<1.$

\begin{lem}\label{lem:016}
  Let $0< p_0<p<p_1<\infty$ be related as in $1/p=(1-\theta)/p_0+\theta/p_1$ for some $\theta \in (0,1)$. 
  Given $f\in {\mathscr C}_0^\nf(\mathbb R^n)$ and   $\ve>0,$ there exist  
  smooth functions $h_j^\ve$, $j=1,\dots, N_\ve$, supported in  cubes   on $\mathbb R^n$ with pairwise disjoint interiors,   
  and nonzero complex constants $c_j^\ve$ such that  the functions
  $$
  f_z^\ve  =  \sum_{j=1}^{N_\ve} |c_j^\ve|^{\frac p{p_0} (1-z) +   \frac p{p_1}  z}  \, h_j^\ve
  $$
satisfy
$$
 \big\|{f_\theta^\ve-f}\big\|_{L^2}+ \big\|{f_\theta^\ve-f}\big\|_{L^{p_0}}^{\min(1,p_0)}+ \big\|{f_\theta^\ve-f}\big\|_{L^{p_1}} ^{\min(1,p_1)}<  \ve  
 $$
and 
$$
  \|{f_{it}^\ve}\|_{L^{p_0}}^{p_0} \le   \|f \|_{L^p}^p +\ve'  \, , \q
  \|{f_{1+it}^\ve}\|_{L^{p_1}}^{p_1} \le     \|f \|_{L^p}^p  +\ve'\, , 
$$
where $\ve'$ depends on $\ve,p_0,p_1,p, \|f\|_{L^p}$ and tends to zero as $\ve\to 0$. 
\end{lem}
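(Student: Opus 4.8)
The plan is to build $f_z^\ve$ directly from a fine mesh approximation of $f$. Two features make this work: (i) on the bounding lines $\Re z=0$ and $\Re z=1$ of $S$ the exponent $\tf p{p_0}(1-z)+\tf p{p_1}z$ has \emph{constant} modulus, equal to $\tf p{p_0}$ and $\tf p{p_1}$ respectively, so that only a unimodular factor varies with $\Im z$; (ii) the $h_j^\ve$ will have essentially disjoint supports, so the $L^{p_0}$ and $L^{p_1}$ norms on those lines collapse to Riemann sums for $\|f\|_{L^p}^p$. Note also $(1-\theta)\tf p{p_0}+\theta\tf p{p_1}=p\big(\tf{1-\theta}{p_0}+\tf\theta{p_1}\big)=1$, so $f_\theta^\ve=\sum_j|c_j^\ve|\,h_j^\ve$ is literally the approximant to $f$. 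To construct the data: since $f\in{\mathscr C}_0^\nf(\rn)$, fix a cube $K\supset\supp f$ and, for a small $\de>0$, tile $K$ by congruent closed subcubes $Q_1,\dots,Q_M$ of side $\de$ with pairwise disjoint interiors; in each $Q_j$ pick $x_j$ with $|f(x_j)|=\max_{Q_j}|f|$, discard the cubes on which $f\equiv0$, and on each remaining one set $c_j^\ve=|f(x_j)|>0$. Fix a small $\tau\in(0,1)$, choose smooth $\phi_j$ with $0\le\phi_j\le1$, $\phi_j\equiv1$ on $(1-\tau)Q_j$ and $\supp\phi_j\subset Q_j$, and put $h_j^\ve=\big(f(x_j)/|f(x_j)|\big)\phi_j$; these are smooth, supported in the $Q_j$ (which have pairwise disjoint interiors), and $h_j^\ve$ carries the phase of $f$ while $|c_j^\ve|$ carries its size.

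\emph{The approximation estimate.} On $K$ one writes
$$
f_\theta^\ve-f=\sum_j\big(f(x_j)-f\big)\phi_j+f\Big(\sum_j\phi_j-1\Big).
$$
Since $\supp\phi_j\subset Q_j$, the first sum is $\le\om_f(\sqrt n\,\de)$ pointwise ($\om_f$ the modulus of continuity of $f$) and supported in $K$, so its $L^q$ norm is $\le\om_f(\sqrt n\,\de)|K|^{1/q}$ for each $q\in\{2,p_0,p_1\}$ and tends to $0$ as $\de\to0$. The second term is supported in $\bigcup_j\big(Q_j\setminus(1-\tau)Q_j\big)$, a set of measure $O(\tau|K|)$, and is $\le\|f\|_{L^\nf}$ there, so its $L^q$ norm is $O\big(\|f\|_{L^\nf}(\tau|K|)^{1/q}\big)\to0$ as $\tau\to0$. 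Choosing $\de$ small and then $\tau$ small (using, when $p_0<1$ or $p_1<1$, the $p_i$-subadditivity of $\|\cdot\|_{L^{p_i}}^{p_i}$, which explains the exponents $\min(1,p_i)$) makes the first displayed quantity of the lemma less than $\ve$.

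\emph{The line estimates and conclusion.} At $z=it$ the exponent is $\tf p{p_0}+it\big(\tf p{p_1}-\tf p{p_0}\big)$ with purely imaginary second term, so $|c_j^\ve|$ raised to it has modulus $|c_j^\ve|^{p/p_0}$; since the supports of the $h_j^\ve$ have pairwise disjoint interiors, $|f_{it}^\ve(x)|=|f(x_j)|^{p/p_0}\phi_j(x)$ on the interior of $Q_j$, whence
$$
\big\|f_{it}^\ve\big\|_{L^{p_0}}^{p_0}=\sum_j|f(x_j)|^{p}\int_{Q_j}\phi_j^{p_0}\le\sum_j|f(x_j)|^{p}\,|Q_j|\le\|f\|_{L^p}^p+\ve',
$$
where $\ve':=\om_{|f|^p}(\sqrt n\,\de)\,|K|$, using $\int_{Q_j}\phi_j^{p_0}\le|Q_j|$ and $|f(x_j)|^p\le\tf1{|Q_j|}\int_{Q_j}|f|^p+\om_{|f|^p}(\sqrt n\,\de)$. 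The line $\Re z=1$ is identical with $p_0$ replaced by $p_1$ (again $\int_{Q_j}\phi_j^{p_1}\le|Q_j|$), which gives the second pair of bounds with the same $\ve'$. Since $\de$ (and then $\tau$, hence $N_\ve=$ the number of retained cubes) is chosen as a function of $\ve$ with $\de\to0$ as $\ve\to0$, we get $\ve'\to0$ as $\ve\to0$, matching the statement (the rate may depend on $f$, which is immaterial).

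\emph{Main obstacle.} The only genuinely delicate point is reconciling the smoothness demanded of the $h_j^\ve$ with the \emph{loss-free} upper bounds $\le\|f\|_{L^p}^p+\ve'$ on the two lines: characteristic functions cannot be used, so $\chi_{Q_j}$ is replaced by $\phi_j$, and the crux is that this replacement can only \emph{decrease} $\int\phi_j^{p_i}$ below $|Q_j|$, so the sharp bound survives, while the error it introduces into $f_\theta^\ve-f$ is confined to a union of thin shells of total measure $O(\tau|K|)$ and is therefore negligible in $L^2$, $L^{p_0}$ and $L^{p_1}$ simultaneously. The remaining bookkeeping — arranging $c_j^\ve\neq0$, and the $\min(1,p_i)$ exponents — is handled by discarding the cubes on which $f$ vanishes and by $p$-subadditivity of $\|\cdot\|_{L^p}^p$ for $p<1$.
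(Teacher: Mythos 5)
Your proof is correct and follows essentially the same route as the paper: a step-function approximation of $f$ on a mesh of cubes, smoothed by bumps $0\le\phi_j\le\chi_{Q_j}$ so that on the lines $\Re z=0,1$ the disjoint supports collapse the $L^{p_0}$ and $L^{p_1}$ norms to a sum that can only be smaller than a Riemann sum for $\|f\|_{L^p}^p$, with the phase of $f$ carried by the $h_j^\varepsilon$ and the modulus by $|c_j^\varepsilon|$. The only cosmetic discrepancy is that your $\varepsilon'=\omega_{|f|^p}(\sqrt n\,\delta)\,|K|$ depends on $f$ itself rather than only on $\varepsilon,p_0,p_1,p,\|f\|_{L^p}$ as stated; this is harmless and easily repaired (shrink $\delta$ until this quantity is below $\varepsilon$ and declare $\varepsilon'=\varepsilon$), since the later application only uses that $\varepsilon'\to0$ as $\varepsilon\to0$ for fixed $f$.
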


\begin{proof}
Given $f\in {\mathscr C}_0^\nf(\mathbb R^n)$  and   $ \ve>0$, by   uniform continuity there
there are $N_\ve$   cubes $Q_j^\ve$ (with disjoint interiors) and nonnegative  constants $c_j^\ve $ such that
$$
\Big\| f - \sum_{j=1}^{N_\ve} c_j^\ve \chi_{Q_j^\ve} \Big\|_{L^2} +
\Big\| f - \sum_{j=1}^{N_\ve} c_j^\ve \chi_{Q_j^\ve} \Big\|_{L^{p_0}} ^{\min(1,p_0)}   
+\Big\| f - \sum_{j=1}^{N_\ve} c_j^\ve \chi_{Q_j^\ve} \Big\|_{L^{p_1}} ^{\min(1,p_1)}  <\ve    \, .  
$$

 Find nonnegative smooth functions   $g_j^\ve\le \chi_{Q_j^\ve}$ such that
$$
\Big\| \sum_{j=1}^{N_\ve} c_j^\ve (  g_j^\ve-    \chi_{Q_j^\ve}   )  \Big\|_{L^2} +
\Big\|  \sum_{j=1}^{N_\ve}  c_j^\ve ( g_j^\ve-    \chi_{Q_j^\ve}   ) \Big\|_{L^{p_0}} ^{\min(1,p_0)}+
\Big\|  \sum_{j=1}^{N_\ve}  c_j^\ve ( g_j^\ve-    \chi_{Q_j^\ve}   ) \Big\|_{L^{p_1}}^{\min(1,p_1)} <\ve   
$$
and
$$
\bigg( \sum_{j=1}^{N_\ve} |c_j^\ve|^p \| g_j^\ve - \chi_{Q_j^\ve}\|_{L^{p_0}}^{p_0} \bigg)^{\f 1{p_0}}+
\bigg( \sum_{j=1}^{N_\ve} |c_j^\ve|^p \| g_j^\ve - \chi_{Q_j^\ve}\|_{L^{p_1}}^{p_1} \bigg)^{\f 1{p_1}} <\ve\, . 
$$
Let $\phi_j^\ve$ be the argument of the complex number $c_j^\ve$. 
Set $h_j^\ve = e^{i\phi_j^\ve} g_j^\ve$ and
notice that 
$ f_\theta^\ve = \sum_{j=1}^{N_\ve} |c_j^\ve| h_j^\ve =  \sum_{j=1}^{N_\ve}  c_j^\ve  g_j^\ve $  satisfies 
$$
 \big\|{f_\theta^\ve-f}\big\|_{L^2}+ \big\|{f_\theta^\ve-f}\big\|_{L^{p_0}}^{\min(1,p_0)}+ \big\|{f_\theta^\ve-f}\big\|_{L^{p_1}} ^{\min(1,p_1)}<  \ve .
 $$
Moreover, the choice of $g_j^\ve$   implies that
$$
\| f_{it} \|_{L^{p_0}} \le \big( B^{\min(1,p_0)} +\ve ^{\min(1,p_0)} \big)^{\f{1}{\min(1,p_0)} }, 
$$
 where 
 $$
 B=   \Big\| \sum_{j=1}^{N_\ve} c_j^\ve \chi_{Q_j^\ve} \Big\|_{L^p}^{\f{p}{p_0}} \le 
\Big( \big(\ve^{\min(1,p )} +\|f\|_{L^p}^{\min(1,p )}  \big)^{ \f{1}{\min(1,p )} }  \Big)^{\f{p}{p_0}}.
 $$
 An analogous estimate holds for   $f_{1+it}$. 
 Given $a,c>0$ and $\ve>0$ set $\ve'=\ve'(\ve,a,c)= (\ve^a+c^a)^{1/a}-c$. Then $ (\ve^a+c^a)^{1/a} \le \ve'+c$ and $\ve'\to 0 $ as $\ve\to 0$. Then for a suitable $\ve'$ that only depends on 
 $\ve, p,p_0,p_1, \|f\|_{L^p}$, the preceding estimates give:
  $ \|{f_{it}^\ve}\|_{L^{p_0}}^{p_0} \le   \|f \|_{L^p}^p +\ve'$ and $ \|{f_{1+it}^\ve}\|_{L^{p_1}}^{p_1} \le   \|f \|_{L^p}^p +\ve'$, as claimed. 
\end{proof}

\begin{lem}\label{lem:analint}
For $z$ in the strip $a<\Re(z)<b$ and $x\in \mathbb R^n$,
let   $H(z,x)$ be analytic in $z$ and  smooth in $x\in \mathbb{R}^n$ that satisfies
  $$
  \abs{H(z,x)}+\abs{\f{dH}{dz} (z,x)}\le H_*(x),\quad\forall a<\Re(z)<b,
  $$
 where $H_*$ is  a measurable function on $\mathbb R^n$.
Let $f$ be a  complex-valued smooth function on $\mathbb R^n$ such that
  $$
  \int_{\mathbb R^n}\max\set{\abs{f(x)}^a,\abs{f(x)}^b}\Big\{ 1+ \abs{\log(\abs{f(x)})} \Big\} {H_*(x)}\;dx <\infty .
  $$
  Then the function
  $$
  G(z) = \int_{\mathbb R^n}\abs{f(x)}^ze^{i \textup{Arg } f(x)}H(z,x)dx
  $$
  is analytic on the strip $a<\Re(z)<b$ and continuous up to the boundary.
\end{lem}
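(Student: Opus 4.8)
The plan is to write $G$ as an integral whose integrand is, for a.e.\ fixed $x$, holomorphic in $z$ and which, together with its $z$-derivative, admits an $x$-integrable majorant independent of $z$; analyticity on the open strip then comes from the Leibniz rule for differentiation under the integral sign (equivalently, from Morera's theorem together with Fubini's theorem), and continuity up to the boundary from the dominated convergence theorem.

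Concretely, I would set $m(x)=\max\{|f(x)|^a,|f(x)|^b\}$ and $F(z,x)=|f(x)|^z e^{i\,\textup{Arg}\,f(x)}H(z,x)$, with the convention $F(z,x)=0$ where $f(x)=0$; this convention is harmless, because if $a<0$ the integrability hypothesis forces $H_*=0$ a.e.\ on $\{f=0\}$, while if $a\ge0$ the factor $|f(x)|^{\Re(z)}$ vanishes or stays bounded on that set. Since $t^{\Re(z)}\le\max\{t^a,t^b\}$ for every $t>0$ when $a\le\Re(z)\le b$, I get $|F(z,x)|\le m(x)H_*(x)$ uniformly for $z$ in the closed strip, and as $1+\bigl|\log|f(x)|\bigr|\ge1$ the hypothesis yields $\int_{\mathbb R^n}m(x)H_*(x)\,dx<\infty$; hence $G$ is well defined with $|G(z)|\le\|mH_*\|_{L^1}$ throughout the strip.

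For analyticity, I would note that for fixed $x$ with $f(x)\neq0$ the function $z\mapsto F(z,x)=e^{z\log|f(x)|}e^{i\,\textup{Arg}\,f(x)}H(z,x)$ is holomorphic on $a<\Re(z)<b$, with
$$
\frac{\partial F}{\partial z}(z,x)=\Bigl(\log|f(x)|\,H(z,x)+\frac{dH}{dz}(z,x)\Bigr)e^{z\log|f(x)|}e^{i\,\textup{Arg}\,f(x)},
$$
so that $|\partial_z F(z,x)|\le\bigl(1+\bigl|\log|f(x)|\bigr|\bigr)m(x)H_*(x)$, which is in $L^1(\mathbb R^n)$ by hypothesis and is independent of $z$. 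That is precisely what is needed to differentiate under the integral sign, giving that $G$ is holomorphic on the open strip with $G'(z)=\int_{\mathbb R^n}\partial_z F(z,x)\,dx$. Equivalently, I could run Morera's theorem: $G$ is continuous on the open strip by dominated convergence, and for a closed triangle $\Delta\subset\{a<\Re(z)<b\}$, Fubini's theorem (valid since $|F|\le mH_*\in L^1$ uniformly on $\Delta$ and $\partial\Delta$ has finite length) together with Cauchy--Goursat gives $\oint_{\partial\Delta}G(z)\,dz=\int_{\mathbb R^n}\bigl(\oint_{\partial\Delta}F(z,x)\,dz\bigr)dx=0$.

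Finally, for continuity up to the boundary I would take $z_0$ with $\Re(z_0)\in\{a,b\}$ and $z_k\to z_0$ inside the closed strip; then $F(z_k,x)\to F(z_0,x)$ for a.e.\ $x$, using that $z\mapsto H(z,x)$ is continuous up to the boundary with $|H(\cdot,x)|\le H_*(x)$ there (the natural reading of the hypothesis, and the situation in the applications), and the bound $|F(z_k,x)|\le m(x)H_*(x)\in L^1(\mathbb R^n)$ lets dominated convergence conclude $G(z_k)\to G(z_0)$. I expect the only genuinely delicate points to be the production of the single $z$-independent integrable majorant $\bigl(1+\bigl|\log|f|\bigr|\bigr)mH_*$ — which is exactly why that logarithmic weight is imposed in the hypothesis — and the verification that $\{f=0\}$ contributes nothing; both are handled above.
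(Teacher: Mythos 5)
Your proposal is correct and follows essentially the same route as the paper: for fixed $x$ with $f(x)\neq 0$ the integrand is holomorphic in $z$, and the single $z$-independent majorant $\bigl(1+\bigl|\log|f(x)|\bigr|\bigr)\max\{|f(x)|^a,|f(x)|^b\}H_*(x)$ justifies differentiating under the integral sign via dominated convergence, with dominated convergence again giving continuity up to the boundary. Your extra remarks (the Morera alternative and the explicit treatment of the set $\{f=0\}$, which the paper handles by simply integrating over $A=\{f\neq 0\}$) are harmless refinements of the same argument.
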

\begin{proof}
  Let $A = \set{x\ :\ f(x)\ne 0}.$ For $x\in A$ denote
  $$
  F(z,x) = \abs{f(x)}^ze^{i \textup{Arg } f(x)} H(z,x).
  $$
   Fix $a<\Re(z_0)<b $ and $x\in A$. Then
  \begin{align*}
 & \lim_{z\to z_0}\dfrac{F(z,x)-F(z_0, x)}{z-z_0} \\
 & = \abs{f(x)}^{z_0}\log \abs{f(x)} \,\, e^{i \textup{Arg }  f(x)}
  H(z_0,x) + \abs{f(x)}^{z_0} e^{i \textup{Arg } f(x)}  \f{dH }{dz}(z_0,x)
\end{align*}
  for all $x\in A.$ We also have
  $$
  \abs{\dfrac{F(z,x)-F(z_0, x)}{z-z_0}}\le
  \max\set{\abs{f(x)}^a,\abs{f(x)}^b}\Big( 1+ \abs{\log \abs{f(x)} }  \Big)  H_*(x)
  $$
  for all $x\in A.$ By Lebesgue dominated convergence theorem, the function $G$ is analytic and its derivative is
  $$
  G'(z)= \int_{\mathbb R^n} \bigg[ \abs{f(x)}^{z }\log(\abs{f(x)})e^{i \textup{Arg }  f(x)}
  H(z ,x) + \abs{f(x)}^{z } e^{i \textup{Arg } f(x)}  \f{dH }{dz}(z ,x)   \bigg]  dx
  $$
Also, the function $G$ is also continuous on the boundary $\Re(z)=a$ and $\Re(z) = b.$
\end{proof}
\begin{lem}[{ \cite{CFA, hirschman}}]\label{lem:ThreeLines} 
  Let $F$ be analytic on the open strip $S=\set{z\in\mathbb{C}\ :\ 0<\Re(z)<1}$ and continuous on its closure.
  Assume that for all  $0\le \tau \le 1$ there exist functions $A_\tau$ on the { real} line such that
$$
    | F(\tau+it) |  \le A_\tau(t)    \qquad \textup{  for all $t\in\mathbb{R}$,}
$$
and suppose that there exist constants   $A>0$ and $0<a<\pi$ such that for all $t\in \mathbb R$ we have
  $$
0<  A_\tau(t) \le \exp \big\{ A e^{a |t|} \big\} \, .
  $$
 Then   for  $0<\theta<1 $   we have
  $$
  \abs{F(\theta )}\le \exp\left\{
  \dfrac{\sin(\pi \theta)}{2}\int_{-\infty}^{\infty}\left[
  \dfrac{\log |A_0(t ) | }{\cosh(\pi t)-\cos(\pi\theta)}
  +
  \dfrac{\log | A_1(t )| }{\cosh(\pi t)+\cos(\pi\theta)}
  \right]dt
  \right\}\, .
  $$
\end{lem}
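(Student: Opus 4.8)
The plan is to compare $\log|F|$, which is subharmonic on $S$, with the harmonic function on $S$ whose boundary data are $\log|A_0(t)|$ on $\{\Re z=0\}$ and $\log|A_1(t)|$ on $\{\Re z=1\}$, and to control this comparison at infinity by a Phragm\'en--Lindel\"of argument; the subcriticality $a<\pi$ is precisely what makes the argument work. Let $\omega_0(z,t)\,dt$ and $\omega_1(z,t)\,dt$ be the restrictions to $\{\Re z=0\}$ and $\{\Re z=1\}$ of the harmonic measure of $\partial S$ at a point $z\in S$; these are nonnegative, decay exponentially in $t$ for each fixed $z$, and satisfy $\int_{\mathbb R}\omega_0(z,t)\,dt+\int_{\mathbb R}\omega_1(z,t)\,dt=1$. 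The conformal map $\Phi(z)=e^{i\pi z}$ sends $S$ onto the upper half-plane $\mathbb H=\{w:\Im w>0\}$, taking $\{\Re z=0\}$ onto $(0,\infty)$ via $it\mapsto e^{-\pi t}$, taking $\{\Re z=1\}$ onto $(-\infty,0)$ via $1+it\mapsto -e^{-\pi t}$, and sending $\theta\in(0,1)$ to $e^{i\pi\theta}=\cos\pi\theta+i\sin\pi\theta$. Pulling back the half-plane Poisson kernel at $e^{i\pi\theta}$, namely $\tfrac1\pi\sin\pi\theta/\big((\xi-\cos\pi\theta)^2+\sin^2\pi\theta\big)$, substituting $\xi=\pm e^{-\pi t}$, and using the identity $e^{-\pi t}/\big(e^{-2\pi t}\mp 2e^{-\pi t}\cos\pi\theta+1\big)=1/\big(2(\cosh\pi t\mp\cos\pi\theta)\big)$, one obtains
$$
\omega_0(\theta,t)=\frac{\sin\pi\theta}{2(\cosh\pi t-\cos\pi\theta)},\qquad \omega_1(\theta,t)=\frac{\sin\pi\theta}{2(\cosh\pi t+\cos\pi\theta)}.
$$

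Assume first that $A_0$ and $A_1$ are continuous and bounded below by a positive constant $m$. Since each $\omega_j(z,\cdot)$ decays exponentially while $\log m\le\log|A_\tau(t)|\le Ae^{a|t|}$ with $a<\pi$, the Poisson integral
$$
\mathcal U(z)=\int_{\mathbb R}\log|A_0(t)|\,\omega_0(z,t)\,dt+\int_{\mathbb R}\log|A_1(t)|\,\omega_1(z,t)\,dt
$$
converges, is harmonic on $S$, extends continuously to $\bar S$ with the prescribed boundary values, and satisfies $\mathcal U\ge\log m$ on $S$. Then $v:=\log|F|-\mathcal U$ is subharmonic on $S$, satisfies $\limsup_{z\to\zeta}v(z)\le 0$ at every $\zeta\in\partial S$ (because $|F(\zeta)|\le A_{\Re\zeta}(\Im\zeta)$ and $F,\mathcal U$ are continuous up to $\partial S$), and obeys $v(z)\le Ae^{a|\Im z|}-\log m$. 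Fix $\gamma\in(a,\pi)$ and set $\Psi_\ve(z)=-\ve\,\Re\cos\big(\gamma(z-\tfrac12)\big)$, a harmonic function on $S$. Since $\Re\cos(\gamma(z-\tfrac12))=\cos(\gamma(\Re z-\tfrac12))\cosh(\gamma\Im z)$ and $|\gamma(\Re z-\tfrac12)|\le\gamma/2<\pi/2$, this real part is $\ge 0$ on $\partial S$ and $\ge\tfrac12\cos(\gamma/2)e^{\gamma|\Im z|}$ on $\bar S$; hence $\Psi_\ve\le 0$ on $\partial S$ and $\Psi_\ve(z)\le-\tfrac\ve2\cos(\gamma/2)e^{\gamma|\Im z|}$ on $\bar S$. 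Therefore $v+\Psi_\ve$ is subharmonic, has nonpositive $\limsup$ at every boundary point, and (since $\gamma>a$) tends to $-\nf$ uniformly in $\Re z$ as $|\Im z|\to\nf$; applying the maximum principle on the rectangles $\{0\le\Re z\le 1,\ |\Im z|\le T\}$ and letting $T\to\nf$ gives $v+\Psi_\ve\le 0$ on $S$, and then $\ve\to 0$ gives $v\le 0$ on $S$. In particular $\log|F(\theta)|\le\mathcal U(\theta)$, which, after inserting the densities $\omega_0(\theta,\cdot)$ and $\omega_1(\theta,\cdot)$ computed above, is precisely the asserted bound.

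The general case follows by truncation: apply the previous step to $A_\tau^{(k)}(t):=\max\{|F(\tau+it)|,1/k\}$, which is continuous, bounded below by $1/k$, still dominates $|F|$ on $\partial S$, and is still $\le\exp\{Ae^{a|t|}\}$; as $k\to\nf$ these decrease to $|F(\tau+i\,\cdot\,)|\le A_\tau$, and since the positive parts of $\log A_\tau^{(k)}$ are dominated by the $\omega_\tau(\theta,\cdot)$-integrable function $Ae^{a|t|}$, monotone convergence passes the resulting bound to the limit and yields $\log|F(\theta)|\le\int_{\mathbb R}\log|A_0(t)|\,\omega_0(\theta,t)\,dt+\int_{\mathbb R}\log|A_1(t)|\,\omega_1(\theta,t)\,dt$ (read as $F(\theta)=0$ in the degenerate case that the integral is $-\nf$).

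The main obstacle is the majorization $\log|F|\le\mathcal U$. One cannot simply invoke the classical three-lines lemma or the Poisson representation of bounded analytic functions, since $\log|F|$ is only subharmonic --- it is $-\nf$ at the zeros of $F$ --- and $F$ as well as the boundary data are permitted to grow at infinity. It is exactly the hypothesis $a<\pi$ that allows one to choose $\gamma\in(a,\pi)$ for which the comparison function $\exp\{-\ve\cos(\gamma(z-\tfrac12))\}$ decays fast enough along the strip to absorb that growth uniformly in $\Re z$, so that the Phragm\'en--Lindel\"of step closes.
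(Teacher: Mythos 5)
Your proof is correct: the conformal map $e^{i\pi z}$ transfers the half-plane Poisson kernel to exactly the stated strip densities $\frac{\sin\pi\theta}{2(\cosh\pi t\mp\cos\pi\theta)}$, and the Phragm\'en--Lindel\"of comparison with $-\ve\,\Re\cos(\gamma(z-\tfrac12))$ for $a<\gamma<\pi$, followed by the truncation $\max\{|F|,1/k\}$ and monotone convergence, legitimately yields the harmonic majorization $\log|F|\le\mathcal U$. The paper does not prove this lemma but quotes it from \cite{CFA,hirschman}, and your argument is essentially the standard proof given there, so it agrees with the intended source rather than offering a genuinely different route.
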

In calculations it is crucial to note that
\begin{equation}\label{ide}
\dfrac{\sin(\pi \theta)}{2}\int_{-\infty}^{\infty}
  \dfrac{dt }{\cosh(\pi t)-\cos(\pi\theta)}  =1-\theta\, , \q
  \dfrac{\sin(\pi \theta)}{2}\int_{-\infty}^{\infty}
  \dfrac{dt }{\cosh(\pi t)+\cos(\pi\theta)} = \theta.
\end{equation}

The main result of this section is the following:
\begin{thm}\label{interpL}
 Fix $1<q_0,q_1,r_0,r_1<\infty$, $0<p_0,p_1,s_0,s_1<\infty$. Suppose that $r_0s_0>n$ and $r_1s_1>n$.
 Let $\wh{\Psi}$ be supported in the annulus $1/2\le |\xi|\le 2$ on $\mathbb R^n$ and satisfy
 $$
 \sum_{j\in \mathbb Z} \widehat{\Psi}(2^{-j}\xi) = 1, \q\q \xi\neq 0.
 $$
 Assume that for $k\in\set{0,1}$ we have
  \begin{equation}
    \norm{T_\sigma(f)}_{L^{q_k}}\le K_k\sup_{j\in\mathbb Z}\big\|{\sigma(2^j\cdot)\widehat{\Psi}}\big\|_{L^{r_{k}}_{s_k}}
    \norm{f}_{L^{p_k}}
  \end{equation}
  for   all $f\in \mathscr C_0^\nf(\mathbb R^n)$. 
For $0<\theta<1$ let
  $$
  \frac 1p  =\frac{1-\theta}{p_0} + \frac{ \theta}{p_1} , \quad
    \frac 1q =\frac{1-\theta}{q_0} + \frac{ \theta}{q_1} , \quad
       \frac 1r =\frac{1-\theta}{r_0} + \frac{ \theta}{r_1} , \quad
          s =(1-\theta)s_0 +   \theta s_1  .
  $$
Then  there is  a constant $C_*= C_*( r_0,r_1,s_0,s_1,  n) $ such that for all $f\in \mathscr C_0^\nf(\mathbb R^n)$
we have
  \begin{equation}
    \norm{T_\sigma(f)}_{L^{q}(\rn)}\le C_* K_0^{1-\theta} K_1^{\theta} \sup_{j\in\mathbb Z}\big\|{\sigma(2^j\cdot)\widehat{\Psi}}\big\|_{L^{r}_{s}}
    \norm{f}_{L^{p}(\rn)}.
  \end{equation}
\end{thm}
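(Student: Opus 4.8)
The plan is to run a bilinear complex-interpolation argument of Stein type, resting on Lemmas~\ref{lem:016}, \ref{lem:analint} and \ref{lem:ThreeLines}. Since both sides of the claimed inequality are homogeneous in $\si$, I may assume $M:=\sup_{j\in\mathbb Z}\big\|\si(2^j\cdot)\wh\Psi\big\|_{L^{r}_{s}}=1$, and I normalize $\|f\|_{L^p}=1$ as well. By duality ($1<q<\nf$) it suffices to bound $\abs{\langle T_\si f,g\rangle}$ by $C_*K_0^{1-\theta}K_1^{\theta}$ for every $g\in\mathscr C_0^\nf(\rn)$ with $\|g\|_{L^{q'}}\le1$. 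First I would linearize both arguments: applying Lemma~\ref{lem:016} to $f$ with the triple $p_0,p,p_1$ and (in its evident variant obtained by the reflection $z\mapsto1-z$ when the exponents are not increasing) to $g$ with the triple $q_0',q',q_1'$, I get analytic families $f_z^\ve=\sum_{j}|c_j^\ve|^{\f{p}{p_0}(1-z)+\f{p}{p_1}z}h_j^\ve$ and $g_z^\ve=\sum_{k}|d_k^\ve|^{\f{q'}{q_0'}(1-z)+\f{q'}{q_1'}z}\wt h_k^\ve$, each a finite sum of smooth compactly supported functions, such that $f_\theta^\ve\to f$ and $g_\theta^\ve\to g$ in $L^2$ as $\ve\to0$, with $\|f_{it}^\ve\|_{L^{p_0}}^{p_0}\le1+\ve'$ and $\|g_{it}^\ve\|_{L^{q_0'}}^{q_0'}\le1+\ve'$ on the line $\Re z=0$, the analogous bounds with $p_1$ and $q_1'$ on $\Re z=1$, and $\ve'\to0$.

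The heart of the matter will be to manufacture an analytic family of multipliers $\{\si_z\}_{z\in S}$ with $\si_\theta=\si$ whose Sobolev seminorms $\sup_j\big\|\si_{it}(2^j\cdot)\wh\Psi\big\|_{L^{r_0}_{s_0}}$ and $\sup_j\big\|\si_{1+it}(2^j\cdot)\wh\Psi\big\|_{L^{r_1}_{s_1}}$ grow at most polynomially in $\abs t$. Writing $\si(2^j\cdot)\wh\Psi=(I-\De)^{-s/2}u_j$ with $u_j:=(I-\De)^{s/2}\big(\si(2^j\cdot)\wh\Psi\big)\in L^{r}$, $\|u_j\|_{L^r}\le1$, I would put $s(z)=(1-z)s_0+zs_1$, $\ga(z)=r\big(\f{1-z}{r_0}+\f{z}{r_1}\big)$, $u_j^z=|u_j|^{\ga(z)}e^{i\,\textup{Arg}\,u_j}$ (with $u_j^z:=0$ where $u_j$ vanishes), and define
$$
\si_z(\xi)=\sum_{j\in\mathbb Z}\wh{\wt\Psi}(2^{-j}\xi)\,\big[(I-\De)^{-s(z)/2}u_j^z\big](2^{-j}\xi),
$$
where $\wt\Psi$ is a fixed Schwartz function with $\wh{\wt\Psi}\equiv1$ on $\supp\wh\Psi$ and $\wh{\wt\Psi}$ supported in a slightly larger annulus. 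For each $\xi$ only $O(1)$ summands are nonzero, so $\si_z$ is well defined, and $\si_\theta=\si$ because $\ga(\theta)=1$, $s(\theta)=s$, and $\wh{\wt\Psi}\equiv1$ on the supports that actually occur. The decisive point — the only place where $r_0s_0>n$ and $r_1s_1>n$ are used — is that, writing $\tau=\Re z$ and $1/a_\tau=(1-\tau)/r_0+\tau/r_1$ (so $a_\tau\in(1,\nf)$), one has $\Re s(z)-n/a_\tau=(1-\tau)(s_0-n/r_0)+\tau(s_1-n/r_1)>0$ for every $\tau\in[0,1]$; since imaginary powers of $(I-\De)$ act boundedly on $L^{a_\tau}$ with polynomially growing norms and $(I-\De)^{-\Re s(z)/2}$ carries $L^{a_\tau}$ into $L^{a_\tau}_{\Re s(z)}\hookrightarrow L^\nf$, it follows that $(I-\De)^{-s(z)/2}u_j^z\in L^\nf$ with norm locally bounded in $z$ and polynomially bounded in $\abs t$ on the boundary lines, so that each $\si_z$ is a genuine bounded Fourier multiplier. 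Finally, up to a bounded dilation and multiplication by one of the $O(1)$ fixed Schwartz functions $\wh\Psi(\cdot)\wh{\wt\Psi}(2^{-\ell}\cdot)$ — operations bounded on $L^{r_0}_{s_0}$ by the Kato--Ponce (fractional Leibniz) inequality — the function $\si_{it}(2^j\cdot)\wh\Psi$ is a sum of $O(1)$ terms of the form $(I-\De)^{-s(it)/2}u_k^{it}$, and since $s_0-s(it)=-it(s_1-s_0)$ is purely imaginary,
$$
\big\|(I-\De)^{-s(it)/2}u_k^{it}\big\|_{L^{r_0}_{s_0}}=\big\|(I-\De)^{-it(s_1-s_0)/2}u_k^{it}\big\|_{L^{r_0}}\le C(1+\abs t)^N\|u_k\|_{L^r}^{r/r_0}\le C(1+\abs t)^N,
$$
so $\sup_j\|\si_{it}(2^j\cdot)\wh\Psi\|_{L^{r_0}_{s_0}}\le C(1+\abs t)^N$; the estimate on $\Re z=1$ is identical with $r_1,s_1$.

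With these pieces assembled, I would set $F_\ve(z)=\langle T_{\si_z}f_z^\ve,g_z^\ve\rangle$, which is a finite linear combination of functions $z\mapsto|c_j^\ve|^{\f{p}{p_0}(1-z)+\f{p}{p_1}z}\,|d_k^\ve|^{\f{q'}{q_0'}(1-z)+\f{q'}{q_1'}z}\,\langle T_{\si_z}h_j^\ve,\wt h_k^\ve\rangle$. Writing $\langle T_{\si_z}h_j^\ve,\wt h_k^\ve\rangle$ through the Fourier transform and the formula for $\si_z$ above puts each summand into the form covered by Lemma~\ref{lem:analint}, so $F_\ve$ is analytic on $S$ and continuous up to the boundary; since $\|\si_z\|_{L^\nf}\le C(1+\abs t)^N$ throughout the strip while the factors $|c_j^\ve|^{(\cdots)},|d_k^\ve|^{(\cdots)}$ are bounded on vertical lines, $F_\ve$ obeys the admissible growth hypothesis of Lemma~\ref{lem:ThreeLines}. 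On $\Re z=0$, the endpoint assumption applied to the multiplier $\si_{it}$ together with the Sobolev bound just obtained gives
$$
\abs{F_\ve(it)}\le\|T_{\si_{it}}f_{it}^\ve\|_{L^{q_0}}\|g_{it}^\ve\|_{L^{q_0'}}\le CK_0(1+\abs t)^N(1+\ve')^{1/p_0}(1+\ve')^{1/q_0'}=:A_0(t),
$$
and similarly $\abs{F_\ve(1+it)}\le CK_1(1+\abs t)^N(1+\ve')^{1/p_1}(1+\ve')^{1/q_1'}=:A_1(t)$. Inserting $A_0$ and $A_1$ into Lemma~\ref{lem:ThreeLines} and invoking the identities~\eqref{ide}: the constants $\log(CK_0)$ and $\log(CK_1)$ combine to $(CK_0)^{1-\theta}(CK_1)^{\theta}$; the terms $N\log(1+\abs t)$ integrate against the Poisson-type kernels to a quantity bounded uniformly for $\theta\in(0,1)$ and depending only on $r_0,r_1,s_0,s_1,n$; and the $\ve'$-terms yield a factor $(1+\ve')^{1/p}(1+\ve')^{1/q'}\to1$. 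Since $F_\ve(\theta)=\langle T_\si f_\theta^\ve,g_\theta^\ve\rangle\to\langle T_\si f,g\rangle$ as $\ve\to0$ (because $\|\si\|_{L^\nf}<\nf$), letting $\ve\to0$ gives $\abs{\langle T_\si f,g\rangle}\le C_*K_0^{1-\theta}K_1^{\theta}$; taking the supremum over $g$ and undoing the normalizations finishes the proof.

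The hard part is the second paragraph: making the definition of $\{\si_z\}$ meaningful and establishing $\si_z\in L^\nf$ — which is precisely where the strict inequalities $r_ks_k>n$ enter, through the Sobolev embedding $L^{a_\tau}_{\Re s(z)}\hookrightarrow L^\nf$ — and then proving the uniform-over-dyadic-annuli Sobolev bounds on the two boundary lines with merely polynomial growth in $\abs t$, which hinge on the $L^{r_k}$-boundedness of imaginary powers of $(I-\De)$ with polynomially growing operator norms and on the boundedness of multiplication by a fixed Schwartz function on $L^{r_k}_{s_k}$. The remaining items — verifying via Lemma~\ref{lem:analint} that $z\mapsto\langle T_{\si_z}h_j^\ve,\wt h_k^\ve\rangle$ is analytic, aligning the endpoints of the two linearizations with the two endpoint hypotheses, and checking that the weighted logarithmic integrals produced by Lemma~\ref{lem:ThreeLines} are bounded uniformly in $\theta$ (so that $C_*$ is independent of $\theta$) — are routine.
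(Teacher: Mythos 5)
Your proposal is correct and follows essentially the same route as the paper: the same analytic family $\si_z$ built from $\varphi_j=(I-\De)^{s/2}(\si(2^j\cdot)\wh\Psi)$ raised to the power $r(\tfrac{1-z}{r_0}+\tfrac{z}{r_1})$ and smoothed by $(I-\De)^{-\frac{s_0(1-z)+s_1z}{2}}$ with a wider cutoff, the same linearization of $f$ and $g$ via Lemma~\ref{lem:016}, analyticity via Lemma~\ref{lem:analint}, and the three-lines bound of Lemma~\ref{lem:ThreeLines} together with \eqref{ide}, followed by the $\ve\to0$ limit and duality. Your treatment of the boundary Sobolev bounds (imaginary powers plus multiplication by fixed cutoffs) and of the ordering of the dual exponents only makes explicit steps the paper leaves implicit.
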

\begin{proof}
  Fix $\widehat{\Phi}$ such that $\supp(\Phi)\subset\set{\frac14\le \abs{\xi}\le 4}$ and $\wh{\Phi}\equiv 1$ on the support of the function $\widehat{\Psi}.$ Denote
  $
  \varphi_j = (I-\Delta)^{\frac{s}2}[\sigma(2^j\cdot)\widehat{\Psi}]
  $
  and define
\begin{equation}\label{100}
  \sigma_z(\xi) = \sum_{j\in \mathbb Z}(I-\Delta)^{-\frac{s_0(1-z)+s_1 z}{2}}
  \left[
 |\varphi_j|^{r(\frac{1-z}{r_0}+\frac{z}{r_1})}e^{i \textup{Arg } (\varphi_j)}
  \right](2^{-j}\xi)\widehat{\Phi}(2^{-j}\xi).
\end{equation}
 This sum has only finitely many terms and we estimate its $\li$ norm.
Fix $\xi \in \rn$. Then there is a $j_0$ such that $|\xi | \approx 2^{j_0}$ and there are only two terms in the sum in
 \eqref{100}.  For these terms we estimate  the $\li$ norm of
 $(I-\Delta)^{-\frac{s_0(1-z)+s_1 z}{2}}
  \big[ |\varphi_j|^{r(\frac{1-z}{r_0}+\frac{z}{r_1})}e^{i \textup{Arg } (\varphi_j)} \big]$.
  For   $z=\tau+it$ with $0\le \tau\le 1$, let $s_\tau= (1-\tau)s_0+\tau s_1$ and $1/r_\tau = (1-\tau)/r_0+\tau /r_1$.
  By the Sobolev embedding theorem
  we have
\begin{align}
\  \Big\| (I-\Delta)^{-\frac{s_0(1-z)+s_1 z}{2}}&
  \big[ |\varphi_j|^{r(\frac{1-z}{r_0}+\frac{z}{r_1})}e^{i \textup{Arg } (\varphi_j)} \big]  \Big\|_{\li}\notag\\
\le  &\  C(r_\tau ,{s_\tau},n)
\Big\| (I-\Delta)^{-\frac{s_0(1-z)+s_1 z}{2}}
  \big[ |\varphi_j|^{r(\frac{1-z}{r_0}+\frac{z}{r_1})}e^{i \textup{Arg } (\varphi_j)} \big]  \Big\|_{ L^{r_\tau}_{s_\tau}  }\notag \\
  \le  &\  C(r_\tau ,{s_\tau},n)\Big\| (I-\Delta)^{it \frac{s_0-s_1 }{2}}
  \big[ |\varphi_j|^{r(\frac{1-z}{r_0}+\frac{z}{r_1})}e^{i \textup{Arg } (\varphi_j)} \big]  \Big\|_{ L^{r_\tau}   } \notag\\
  \le  &\  C'(r_\tau ,{s_\tau},n)(1+|s_0-s_1 |\, |t|)^{n/2+1} \Big\|
    |\varphi_j|^{r(\frac{1-z}{r_0}+\frac{z}{r_1})}e^{i \textup{Arg } (\varphi_j)}    \Big\|_{ L^{r_\tau}   } \notag\\
      \le  &\  C''(r_0, r_1,s_0,s_1 , \tau ,n)(1+ \, |t|)^{n/2+1} \Big\|
    |\varphi_j|^{r(\frac{1-\tau}{r_0}+\frac{\tau}{r_1})}    \Big\|_{ L^{r_\tau}   } \notag\\
          =  &\  C''(r_0, r_1,s_0,s_1 , \tau ,n)(1+ \, |t|)^{n/2+1} \big\|    \varphi_j     \big\|_{ L^{r }   } ^{\f r{r_\tau}}\, .\notag
\end{align}
It follows from this that
 \begin{equation}\label{200}
\| \si_{\tau+it} \|_{\li} \le  C''(r_0, r_1,s_0,s_1 , \tau ,n)(1+ \, |t|)^{n/2+1}
         \Big(  \sup_{j\in\mathbb Z}\big\|{\sigma(2^j\cdot)\widehat{\Psi}}\big\|_{L^{r}_{s}} \Big)^{\f r{r_\tau}} \, .
   \end{equation}

  Let $T_z$ be the family of operators associated to the multipliers $\sigma_z.$ Let $\ve $ be given.
  Fix $f, g\in  \mathscr C_0^\nf$ and $0<p_0<p<p_1<\nf$, $1<q_0'<q'<q_1'<\nf$.
  Given $\ve>0$, by Lemma \ref{lem:016} there exist functions $f_z $ and
  $ {g}_z$
  such that
  $  \norm{f_\theta^\ve-f}_{L^p}<\ve$, $ \norm{g_\theta^\ve-g}_{L^{q'}}<\ve, $ 
  and that
  \begin{align*}
 & \norm{f_{it}^\ve}_{L^{p_0}}\le   \big( \norm{f}_{L^{p}}+\ve\big)^{\frac p{p_0}} ,\quad
  \norm{f_{1+it}^\ve}_{L^{p_1}}\le   \big( \norm{f}_{L^{p}}+\ve\big)^{\frac p{p_1}},\\
 & \norm{g_{it}^\ve}_{L^{q_0'}}\le   \big( \norm{g}_{L^{q'}}+\ve\big)^{\frac {q'}{q_0'}},\quad
  \norm{g_{1+it}^\ve}_{L^{q_1'}}\le  \big( \norm{g}_{L^{q'}}+\ve\big)^{\frac {q'}{q_1'}}.
  \end{align*}
  Define
  {\allowdisplaybreaks
  \begin{align*}
  F(z) =& \int T_{\sigma_z}(f_z^\ve) {g}_z^\ve\; dx\\
  =&\int \sigma_z(\xi)\widehat{f_z^\ve}(\xi)\widehat{g_z^\ve}(\xi)\; d\xi\\
  =& \sum_{j\in \mathbb Z}\int (I-\Delta)^{-\frac{s_0(1-z)+s_1 z}{2}}
  \left[
  |\varphi_j|^{r(\frac{1-z}{r_0}+\frac{z}{r_1})}e^{i \textup{Arg } (\varphi_j)}
  \right](2^{-j}\xi)\widehat{\Phi}(2^{-j}\xi)
  \widehat{f_z^\ve}(\xi)\widehat{ {g}_z^\ve}(\xi)\; d\xi\\
  =& \sum_{j\in \mathbb Z}\int
  \left[|{\varphi_j}|^{r(\frac{1-z}{r_0}+\frac{z}{r_1})}e^{i \textup{Arg } (\varphi_j)}
  \right](2^{-j}\xi)(I-\Delta)^{-\frac{s_0(1-z)+s_1 z}{2}}
  \left[\widehat{\Phi}(2^{-j}\cdot)
  \widehat{f_z^\ve}\widehat{   {g}_z^\ve}\right](\xi)\; d\xi.
  \end{align*}
  }
  Notice that
  $ 
  (I-\Delta)^{-\frac{s_0(1-z)+s_1 z}{2}}
   [\widehat{\Phi}(2^{-j}\cdot)   \widehat{f_z^\ve}\widehat{   {g}_z^\ve}   ] (\xi)
  $ 
  is equal to a finite sum of  the form
  $$
 \sum_{k,l} |c_k^\ve|^{\frac p{p_0} + ( \frac p{p_1}-\frac p {p_0}) z}  |d_l^\ve|^{\frac{ q'}{q_0'} + ( \frac {q'}{q_1'}-\frac {q'} {q_0'}) z}
  (I-\Delta)^{-\frac{s_0(1-z)+s_1 z}{2}}
  \left[\widehat{\Phi}(2^{-j}\cdot)  \zeta_{k,l} \right]  (\xi) = H(\xi,z),
  $$
  where $\zeta_{k,l}$ are Schwartz functions, and thus it is an analytic function in $z$.

  Lemma \ref{lem:analint} guarantees that $F(z)$ is analytic on the strip $0<\Re(z)<1$ and continuous up to the boundary. Furthermore, by H\"older's inequality,
  $ 
  \abs{F(it)}\le \norm{T_{\sigma_{it}}(f_{it}^\ve)}_{L^{q_0}}\norm{g_{it}^\ve}_{L^{q_0'}},
  $ 
and
  \begin{align*}
  \norm{T_{\sigma_{it}}(f_{it}^\ve)}_{L^{q_0}} 
  \le& K_0\sup_{k\in\mathbb Z}
  \norm{\sigma_{it}(2^k\cdot)\widehat{\Psi}}_{L^{r_0}_{s_0}}\norm{f_{it}^\ve}_{L^{p_0}}\\
  \le&C(n,r_0) (1+|s_1-s_0|\, |t|)^{\frac{n}2+1}K_0\,
  \sup_{j\in\mathbb Z}\| {\varphi_j}\|_{L^r}^{\frac{r}{r_0}}
   \big( \norm{f}_{L^{p}}^p+\ve'\big)^{\frac 1{p_0}} \\
  = & C(n,r_0 )  (1+|s_1-s_0|\, |t|)^{\frac{n}2+1}K_0\,
  \sup_{j\in\mathbb Z}
   \big\|{(I-\Delta)^{\frac{s}2}[\sigma(2^j\cdot)\widehat{\Psi}]} \big\|_{L^r}^{\frac{r}{r_0}}
 \big( \norm{f}_{L^{p}}^p+\ve'\big)^{\frac 1{p_0}} .
  \end{align*}
  Thus, for some constant $C=C(n,r_0,s_0,s_1)$ we have
  $$
  \abs{F(it)}\le C  (1+\abs{t})^{\frac{n}2+1}K_0
  \sup_{j\in\mathbb Z}
   \big\|{(I-\Delta)^{\frac{s}2}[\sigma(2^j\cdot)\widehat{\Psi}]} \big\|_{L^r}^{\frac{r}{r_0}}
 \big( \norm{f}_{L^{p}}^p+\ve'\big)^{\frac 1{p_0}}
  \big( \norm{g}_{L^{q'}}^{q'}+\ve'\big)^{\frac {1}{q_0'}}.
  $$
  Similarly, for some constant $C=C(n,r_1,s_0,s_1)$ we   obtain
  $$
  \abs{F(1+it)}\le C  (1+\abs{t})^{\frac{n}2+1}K_1
  \sup_{j\in\mathbb Z}
   \big\|{(I-\Delta)^{\frac{s}2}[\sigma(2^j\cdot)\widehat{\Psi}]} \big\|_{L^r}^{\frac{r}{r_1}}
\big( \norm{f}_{L^{p}}^p+\ve'\big)^{\frac 1{p_1}}
  \big( \norm{g}_{L^{q'}}^{q'}+\ve'\big)^{\frac 1{q_1'}}
  $$
Thus for $z=\tau+it$,  $t\in\mathbb{R}$  and $0\le \tau \le  1$
it follows from \eqref{200} and from the definition of $F(z)$  that
$$
|F(z)|\le 
C'' 
(1+ \, |t|)^{\f n2+1}
         \Big(  \sup_{j\in\mathbb Z}\big\|{\sigma(2^j\cdot)\widehat{\Psi}}\big\|_{L^{r}_{s}} \Big)^{\f r{r_\tau}}
        \|f_z^\ve\|_{L^2}   \|g_z^\ve\|_{L^2} 
=A_\tau(t)\, ,
$$
noting that $  \|f_z^\ve\|_{L^2}   \|g_z^\ve\|_{L^2} $ is bounded above by constants independent of $t$ and $\tau$. 
Since $A_\tau(t)\le
\exp(A e^{a|t|})  $
it follows that the hypotheses of Lemma~\ref{lem:ThreeLines} are valid.

  Applying Lemma~\ref{lem:ThreeLines} we obtain
  $$
  \abs{F(\theta)}\le C\, K_0^{1-\theta} K_1^{\theta}\sup_{j\in\mathbb Z}
  \big\|{(I-\Delta)^{\frac{s}2}[\sigma(2^j\cdot)\widehat{\Psi}]} \big\|_{L^r}
 \big( \norm{f}_{L^{p}}^p+\ve'\big)^{\f{1}{p}}
  \big( \norm{g}_{L^{q'}}^{q'}+\ve'\big)^{\f{1}{q'}} .
  $$

  But
  $$
  F(\theta) = \int_{\mathbb R^n} \sigma(\xi) \widehat{f_\theta^\ve}(\xi) \wh{ {g}_\theta^\ve}(\xi) \, d\xi \, .
  $$
 Then
\begin{align*}
 \bigg| \int_{\mathbb R^n} \sigma(\xi)& \widehat{f_\theta^\ve}(\xi) \wh{ {g}_\theta^\ve}(\xi) \, d\xi
- \int_{\mathbb R^n} \sigma(\xi) \widehat{f }(\xi) \wh{  g }(\xi) \, d\xi \bigg| \\
= &\q\bigg|  \int_{\mathbb R^n} \sigma(\xi) \Big[ \widehat{f_\theta^\ve}(\xi) \big(\wh{ {g}_\theta^\ve}(\xi)-\wh{  g }(\xi) \big)
 +  \widehat{g }(\xi) \big(\wh{ {f}_\theta^\ve}(\xi)-\wh{  f }(\xi) \big)  \Big]\, d\xi \bigg|  \\
 \le &\q \|\si\|_{\li}  \Big[ \|f_\theta^\ve\|_{L^2} \|{g}_\theta^\ve -g\|_{L^2} +
 \|g_\theta^\ve\|_{L^2} \|{f}_\theta^\ve -f\|_{L^2} \Big] \\
  \le &\q \|\si\|_{L^r_s}  \Big[ \|f \|_{L^2} \|{g}_\theta^\ve -g\|_{L^2} +
 \|g \|_{L^2} \|{f}_\theta^\ve -f\|_{L^2} \Big]\, .
\end{align*}
But the sequences ${f}_\theta^\ve -f$ and ${g}_\theta^\ve -g$ converge  to zero in $L^2$.
Letting $\ve\to 0$, these observations imply  that
$$
\bigg|\int_{\mathbb R^n} \sigma(\xi) \widehat{f }(\xi) \wh{  g }(\xi) \, d\xi \bigg|\le
C\, K_0^{1-\theta} K_1^\theta \sup_{j\in\mathbb Z}
  \big\|{(I-\Delta)^{\frac{s}2}[\sigma(2^j\cdot)\widehat{\Psi}]}\big\|_{L^r}
\norm{f}_{L^{p}} \|g\|_{L^{q'}}
$$
and   taking the supremum over all functions $g\in L^{q'}$ with $\| g\|_{L^{q'}} \le 1$ we obtain
  $$
  \norm{T_{\sigma }(f)}_{L^q}\le C_*\, K_0^{1-\theta} K_1^\theta \sup_{j\in\mathbb Z}
  \big\|{(I-\Delta)^{\frac{s}2}[\sigma(2^j\cdot)\widehat{\Psi}]}\big\|_{L^r}
\norm{f}_{L^{p}}, 
  $$
  where  $C_*=C_*(n,r_1,r_2,s_0,s_1)$. 
\end{proof}

\section{Proof of   Boundedness in Theorem~\ref{ThmMain}}

 To prove Theorem~\ref{ThmMain}
we use Theorem~\ref{interpL} applied as follows:  fix $p\in (1,2)$ such that $ \frac1p-\frac12 <\frac{s}{n} $.
Pick $p_0 =  1+\de $ with $\de$ small such that  $1<p_0<p$ and set $s_0=n/2+\ve$ and $r_0=2$ where $\ve$ is small.  Also set $p_1=2$, $s_1= \ve+\ve^2$, and $r_1= n/\ve  $.
We have that
  \begin{equation}\label{300}
    \norm{T_\sigma(f)}_{L^{p_0}}\le C(n,p_0,r_0,s_0)\sup_{j\in\mathbb Z}\norm{\sigma(2^j\cdot)\widehat{\Psi}}_{L^{r_{0}}_{s_0}}
    \norm{f}_{L^{p_0}}
  \end{equation}
  and
    \begin{equation}\label{400}
    \norm{T_\sigma(f)}_{L^{2}}\le C(n,p_0,r_1,s_1)
    \sup_{j\in\mathbb Z}\norm{\sigma(2^j\cdot)\widehat{\Psi}}_{L^{r_{1}}_{s_1}}
    \norm{f}_{L^{2}}
  \end{equation}
The conditions
$$
\frac 1p  =\frac{1-\theta}{p_0} + \frac{ \theta}{p_1} , \quad
       \frac 1r =\frac{1-\theta}{r_0} + \frac{ \theta}{r_1} , \quad
          s =(1-\theta)s_0 +   \theta s_1  .
  $$
translate into
  $$
 \f1 p -\f12 =\f{1 }{1+\de} -\f12+\theta \Big(\f12-\f1{1+\de}\Big)  , \q   \frac 1r =\frac{1-\theta}{2} + \frac{ \theta\ve}{n} , \quad s= (1-\theta) \f n2 +(1+\theta) (\ve+\ve^2)
  $$
  or
\begin{align*}
  \f1 p -\f12 &=\bigg( \f sn -(1+\theta) \f{ \ve+\ve^2} n  \bigg) \bigg(1-\f{2\de}{1+\de} \bigg)  \\
  &=\f sn -\bigg((1+\theta) \f{ \ve+\ve^2} n +\f{2\de}{1+\de}\f sn -(1+\theta) \f{\ve+\ve^2}{n} \f{2\de}{1+\de} \bigg) <\f sn. 
\end{align*}
Since $\de$ and $\ve$ are very small it follows that $\f 1p -\f12 $ can be arbitrarily close to $\f sn$. Note that once
$s$ is fixed for a given $p$, the optimal $r$ is close  to $\f ns$ (i.e., $\f 1r = \f sn-\f {\ve'} n$).
Interpolating between \eqref{300} and  \eqref{400}, via Theorem~\ref{interpL}, yields the required assertion.

\section{Necessary Conditions}

In this section we discuss examples that reinforce the minimality of the conditions on the indices in Theorem~\ref{ThmMain}.
One way to see this  is to use the multiplier
$m_{a,b}(\xi) = \psi(\xi) |\xi|^{- b} e^{i|\xi|^a}$
where $a > 0$, $a \neq   1$, $b > 0$, and $\psi$ is a smooth function which vanishes in a
neighborhood of the origin and is equal to $1$ for large $\xi$. One can verify that $m_{a,b}$
satisfies \eqref{nj} for $s = b/a$ and $r>n/s$. But it is known that $T_{m_{a,b}}$ is bounded
in $L^p(\mathbb R^n)$, $1 < p < \nf$, if and only if $   |\f 1p-\f 12|\le \f{b/a}{n} $ (see  Hirschman
 \cite[comments  after Theorem 3c]{hirschman2},    Wainger~\cite[Part II]{W}, and    Miyachi~\cite[Theorem 3]{Miy}).  Alternative examples were given in Miyachi and Tomita
\cite[Section 7]{MT}.

In this section we provide yet new examples to indicate the necessity of the indices in   Theorem~\ref{ThmMain}.
We are not sure as to whether boundedness into $L^p$, or even weak $L^p$, is valid in general
under  assumption \eqref{2}   exactly on the critical line
$\big|\frac1p-\frac12\big|=\frac{s}{n} $.

\begin{prop}\label{Nec}
If for all $\si \in \li(\rn)$ such that $\sup_k\|\si(2^k\cdot)\wh\Psi\|_{L^r_s(\bbr^n)}<\nf$ we have
\begin{equation}\label{MKI}
\|T_\si\|_{L^p(\rn)\to L^p(\rn)} \le C_p \, \sup_k\|\si(2^k\cdot)\wh\Psi\|_{L^r_s(\bbr^n)} <\nf\, ,
\end{equation}
then we must necessarily have $rs\ge n$ and $|\tf1p-\tf12|  \le   \tf sn$.
\end{prop}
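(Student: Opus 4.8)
The plan is to prove both assertions by contradiction, in each case producing a multiplier (or a one–parameter family) that meets the hypothesis of \eqref{MKI} but violates its conclusion; the two arguments are independent. Two routine facts will be used throughout. First, for any bounded multiplier $\sigma$ one has $\|\sigma\|_{L^\infty}\le \|T_\sigma\|_{L^p\to L^p}$: either interpolate $T_\sigma$ between $L^p$ and $L^{p'}$ to bound it on $L^2$, where the multiplier norm is exactly $\|\sigma\|_{L^\infty}$, or use the concentration principle (testing against $f_R(x)=e^{2\pi i x\cdot\xi_0}\eta(x/R)$, whose Fourier transform concentrates at an arbitrary $\xi_0$, so that $\|T_\sigma f_R\|_{L^p}/\|f_R\|_{L^p}\to|\sigma(\xi_0)|$). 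Second, if $\sigma$ is supported in $\{1/2<|\xi|<2\}$ then $\sigma(2^k\cdot)\widehat\Psi\equiv 0$ except for $k\in\{-1,0,1\}$, and for those three values both the dilation $g\mapsto g(2^k\cdot)$ and the multiplication $g\mapsto g\,\widehat\Psi$ are bounded on $L^r_s$; hence $\sup_k\|\sigma(2^k\cdot)\widehat\Psi\|_{L^r_s}\le C(n,r,s,\Psi)\,\|\sigma\|_{L^r_s}$.

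\emph{Step 1: $rs\ge n$.} Suppose $rs<n$, so $n/r-s>0$. Fix $\xi_0$ with $|\xi_0|=1$ and a nonzero $\eta\in\mathscr C_0^\nf(\rn)$ supported in the unit ball, and for $\lambda\ge 2$ set $\sigma_\lambda(\xi)=\lambda^{\,n/r-s}\,\eta(\lambda(\xi-\xi_0))$. By the scaling of the Sobolev norm (and the equivalence of $\|\cdot\|_{L^r_s}$ with its homogeneous counterpart on functions supported in a fixed compact set) we get $\|\sigma_\lambda\|_{L^r_s}\le C_0$ uniformly in $\lambda$, while $\|\sigma_\lambda\|_{L^\infty}=\lambda^{\,n/r-s}\|\eta\|_{L^\infty}\to\infty$. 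Since $\sigma_\lambda$ is supported in $\{1/2<|\xi|<2\}$ for $\lambda\ge 2$, the remarks above give $\sup_k\|\sigma_\lambda(2^k\cdot)\widehat\Psi\|_{L^r_s}\le C$ uniformly in $\lambda$, so each $\sigma_\lambda$ is admissible in \eqref{MKI}; but then \eqref{MKI} forces $\lambda^{\,n/r-s}\|\eta\|_{L^\infty}=\|\sigma_\lambda\|_{L^\infty}\le\|T_{\sigma_\lambda}\|_{L^p\to L^p}\le C_p\,C$ for all $\lambda\ge 2$, which is absurd. Hence $rs\ge n$.

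\emph{Step 2: $|1/p-1/2|\le s/n$.} Suppose $|1/p-1/2|>s/n$; this forces $p\ne 2$, so $n|1/p-1/2|>0$, and we may pick $\beta$ with $\max\{s,0\}<\beta<n|1/p-1/2|$ (e.g.\ $\beta=s$ when $s>0$). Put $a=2$, $b=2\beta$ and take $m_{a,b}(\xi)=\psi(\xi)|\xi|^{-b}e^{i|\xi|^a}$ introduced above. It is bounded, and on each dyadic shell with $k$ large, $m_{a,b}(2^k\xi)\widehat\Psi(\xi)=2^{-kb}|\xi|^{-b}e^{i2^{ka}|\xi|^{a}}\widehat\Psi(\xi)$ is a smooth bump of amplitude $\approx 2^{-kb}$ carrying an oscillation of frequency $\approx 2^{ka}$, so $\|m_{a,b}(2^k\cdot)\widehat\Psi\|_{L^r_s}\lesssim 2^{-kb}(2^{ka})^{s}=2^{k(as-b)}\le 1$ since $as=2s\le 2\beta=b$ (the finitely many small and transitional $k$ are trivially controlled). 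Thus $\sup_k\|m_{a,b}(2^k\cdot)\widehat\Psi\|_{L^r_s}<\infty$ for every $r\in(1,\infty)$, so $m_{a,b}$ is admissible in \eqref{MKI}. On the other hand $(b/a)/n=\beta/n<|1/p-1/2|$, so by the known sharp characterization of $L^p(\rn)$-boundedness of $T_{m_{a,b}}$ (Hirschman~\cite{hirschman2}, Wainger~\cite{W}, Miyachi~\cite{Miy}) the operator $T_{m_{a,b}}$ is not bounded on $L^p(\rn)$. This contradicts \eqref{MKI}, so $|1/p-1/2|\le s/n$.

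The only genuinely nontrivial input is the sharp $L^p$-boundedness characterization of $T_{m_{a,b}}$ quoted in Step 2; the remaining ingredients — the scaling identity for $\sigma_\lambda$, the three-term reduction of $\sup_k$, and the uniform dyadic $L^r_s$ estimate for $m_{a,b}$ — are routine, the only slightly delicate point being the $k=\pm1$ terms near the boundary of $\supp\widehat\Psi$, which are in any case dominated by $\|\sigma_\lambda\|_{L^r_s}$ (respectively by a fixed constant).
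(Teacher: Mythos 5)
Your proposal is correct, but it reaches the second (and main) conclusion by a genuinely different route than the paper. For $rs\ge n$ you use essentially the same concentrated-bump example as the paper, only with different bookkeeping: the paper tests $T_\si$ against an explicit $f$ whose Fourier transform sits where the bump multiplier equals $1$ (so $\|T_\si (f)\|_{L^p}\approx \|f\|_{L^p}$ while the dyadic Sobolev norm is $\approx N^{s-n/r}$), whereas you normalize the bump in $L^r_s$ and invoke $\|\si\|_{\li}\le \|T_\si\|_{L^p\to L^p}$; these are equivalent formulations of the same counterexample and both are fine. For $|\f1p-\f12|\le \f sn$, however, you rely on the sharp characterization of $L^p$-boundedness of $T_{m_{a,b}}$, $m_{a,b}(\xi)=\psi(\xi)|\xi|^{-b}e^{i|\xi|^a}$, due to Hirschman, Wainger and Miyachi \cite{hirschman2,W,Miy} --- which is precisely the ``folklore'' route the paper mentions at the start of Section 4 and then deliberately avoids. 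The paper instead builds a new, self-contained family of counterexamples: multipliers consisting of $\sim N$ random (Rademacher) bumps of width $1/N$ in a dyadic shell, with the Sobolev bound $\|\si_{N,\vec t}\|_{L^r_s}\le CN^s$ proved by an interpolation argument (Lemma~\ref{Sob}) and the uniform test-function bound via the Dirichlet kernel (Lemma~\ref{Fun}), then Khintchine's inequality produces the lower bound $N^{n(\f1p-\f12)}$. What your approach buys is brevity, at the cost of importing a deep oscillatory-multiplier theorem; what the paper's approach buys is self-containedness and new examples, which is one of its stated purposes. Two small points in your write-up: the parenthetical ``e.g.\ $\beta=s$ when $s>0$'' contradicts your strict requirement $\beta>\max\{s,0\}$ (harmless, since all you need is $\beta\ge s$ and $\beta/n<|\f1p-\f12|$, and such $\beta$ exists under the contradiction hypothesis); and the claimed bound $\|m_{a,b}(2^k\cdot)\wh\Psi\|_{L^r_s}\lesssim 2^{k(as-b)}$ for fractional $s$ requires an interpolation between integer-order estimates (exactly the device used in the paper's Lemma~\ref{Sob}), so it is routine but not literally a one-line computation.
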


\begin{proof}
First we prove the necessary condition $rs\ge n$. Let $\wh\zeta$ be a smooth function supported in the ball $B(0,1/10)$ in $\rn$ and let $\wh\phi$ be
 supported in the ball $B(0,1/2)$ equal to $1$ on  $B(0,1/5)$.
Define $\wh f (\xi )=\wh\zeta(N(\xi -a))$ with $|a|=1$, and $\si(\xi )= \wh
\phi(N(\xi -a))$, then a direct calculation gives
$\|f \|_{L^{p }(\rn)}\approx N^{-n+n/p }$ and $\|\si\|_{L^r_s(\bbr^{ n})}
\le CN^sN^{- n/r}$; for the last estimate see Lemma~\ref{Sob}.
Moreover, $T_{\si}(f )(x)=N^{- n} \zeta(x/N)e^{2\pi ix\cdot a} $.
We thus   obtain that
$\|T_\si(f )\|_{L^p(\rn)}\approx N^{- n+n/p}  $.
Then \eqref{MKI} yields the inequality $N^{- n+n/p}\le CN^sN^{- n/r} N^{-n+n/p }$,
which forces $s- n/r\ge0$ by letting $N$ go to infinity.

 We now turn to the other necessary condition $|\tf1p-\tf12|  \le   \tf sn$.
By duality it suffices to prove the case when $1<p\le 2$. We will prove our result by
constructing an example. We consider the case $n=1$ first while the higher dimensional
case will be an easy generalization.

Let $\wh \psi, \wh\vp\in \mathscr C^{\nf}_0(\mathbb R)$ such that $0\le \wh\vp\le\chi_{[-1/100,1/100]}$ and
$\chi_{[-1/10,1/10]}\le \wh\psi\le \chi_{[-1/2,1/2]}$.
Therefore $\wh\psi\wh\vp=\wh\vp$.
For a fixed large positive integer $N$, we define
\begin{equation}\label{DefFN}
\wh f_N(\xi)=\sum_{j=-N}^N\wh\vp(N\xi-j) , \qq
 \si_{N,\, t}(\xi)=\sum_{j\in J_N}a_j(t)\wh\psi(N\xi-j) ,
\end{equation}
 where $J_N=\{j\in\mathbb Z:\ \tf N2\le|j|\le 2N\}$ and $t\in [0,1]$.
Here $\{a_j\}_{j=-\nf}^{\nf}$ is the sequence of Rademacher functions indexed by all integers.

One can verify that
$T_{N,\, t}(f_N)=:(\si_{N,\, t}f_N)^{\vee}=(\sum_{j\in J_N}a_j(t)\wh\vp(N\xi-j))^{\vee}$. Recall
that Rademacher functions satisfy   for any $p\in(0,\nf)$
\begin{equation*}
c_p\Big\|\sum_ja_j(t)A_j\Big\|_{L^p([0,1])}\le \Big(\sum_j|A_j|^2\Big)^{1/2}\le C_p\Big\|\sum_ja_j(t)A_j\Big\|_{L^p([0,1])},
\end{equation*}
where   $c_p$ and $C_p$ are constants.
Therefore
\begin{align*}
\bigg(\int_0^1 \|T_{N,\, t}(f_N)\|_{L^p(\mathbb R)}^pdt \bigg)^{1/p}=&
\bigg(\int_0^1\int_{\mathbb R} \bigg| \sum_{j\in J_N}a_j(t)N^{-1}\vp(N^{-1}x)e^{2\pi ixj/N}\bigg|^pdxdt\bigg)^{1/p}\\
\approx &
\bigg(\int_{\mathbb R}\bigg( \sum_{j\in J_N} \Big|N^{-1}\vp(N^{-1}x)e^{2\pi ixj/N}\Big|^2\bigg)^{p/2}dx\bigg)^{1/p}\\
\approx&N^{-1}
\Big(\int_{\mathbb R}|N^{1/2}\vp(N^{-1}x)|^{p}dx\Big)^{1/p}\\
\approx &N^{1/p-1/2}.
\end{align*}

The Sobolev norm of $\si_{N,t}$ is given by the following lemma,      proved in all dimensions.
For $n\ge 1$ and $\vec t=(t_1,\dots, t_n)\in [0,1]^n$ we define a function on $\mathbb R^n$ by
$$
\si_{N,\,\vec t\,}(x_1,\dots,x_n)=\sum_{\vec j\in J_N}a_{j_1}(t_1)\cdots a_{j_n}(t_n)\wh\vp(N\xi_1-j_1)
\cdots \wh\vp(N\xi_n-j_n) \, ,
$$
where  $J_N=\{\vec j=(j_1,\dots,j_n)\in\mathbb Z^n:
\tf N2\le |\vec j|\le 2N\}$. This $\si_{N,\,\vec t}$ coincides with $\si_{N,\, t}$ when $n=1$.

\begin{lem}\label{Sob}
We have that
$
\|\si_{N,\,\vec t\,} \|_{L^r_s(\mathbb R^n)}\le C N^{s}.
$
\end{lem}

We postpone the proof of the lemma and continue with the proof of Proposition~\ref{Nec} when $n=1$.
We note that $\wh f_N$ has $L^q$ norm bounded by a constant independent of $N$,
which implies by the Young's inequality that $\|f_N\|_{L^q}\le C$
with $C$ independent of $N$ when $2\le q\le\nf$. We show in the following lemma
that this property is valid for all $q\in (1,\nf]$.

\begin{lem}\label{Fun}
Let $f_N$ be as in \eqref{DefFN} and let $p\in (1,\nf]$. Then
there is  a constant $C_p$ independent of $N$
such that $\|f_N\|_{L^p}\le C_p$.
\end{lem}

\begin{proof}
We note that
$f_N=\sum_{j=-N}^N\f1N\vp(x/N)e^{2\pi ixj/N}=\tf1N\vp(x/N)D_N(x/N)$,
where $D_N$ is the Dirichlet kernel, whose $L^p$-norm over $[0,1]$ is comparable to
$N^{1/p'}$ when $p>1$; see for example \cite[Exercise 3.1.6]{CFA}. Using this fact and     that
$\vp$ is a Schwartz function we obtain
\begin{align*}
\|f_N \|_{L^p(\mathbb R)}=& \Big\|\tf1N\vp(\tf\cdot N)D_N(\tf\cdot N)\Big\|_{L^p(\mathbb R)}\\
=&\tf 1NN^{1/p}\|\vp D_N\|_{L^p(\mathbb R)}\\
=&N^{-1/p'}\bigg(\sum_{j=-\nf}^{\nf}\int_{j-1}^j|\vp(x)D_N(x)|^pdx\bigg)^{1/p}\\
\le &CN^{-1/p'}\bigg(\sum_{j=-\nf}^{\nf}\f1{(1+|j|)^M}\int_{j-1}^j|D_N(x)|^pdx\bigg)^{1/p}\\
\le &C_p N^{-1/p'}N^{1/p'}=C_p\, .
\end{align*}
This proves the claim.
\end{proof}

In view of Lemma~\ref{Fun}  we   obtain the following inequalities
$$
N^{\f 1p-\f12 }\le  C\bigg(\int_0^1\|T_{N,\, t}(f_N)\|_{L^p(\mathbb R)}^pdt\bigg)^{\f1p}
\le  C\, A\,  \|f\|_p\bigg(\int_0^1 \|\si_{N,\, t}\|_{L^r_s} ^p dt\bigg)^{\f1p}
\le   C\, C_pAN^s.
$$
Letting $N$ go to infinity forces $1/p-1/2\le s$.

We now consider the higher dimensional case. Let $F_N(\vec x\,)=f_N(x_1)\cdots f_N(x_n)$, where $f_N$ is as in \eqref{DefFN}.
It follows from Lemma \ref{Sob} and \ref{Fun}
that $\|F_N\|_{L^p}\le C$ and $\|\si_N\|_{L^r_s}\le CN^s$. A calculation similar to the
one dimensional case shows that $\|T_N(F_N)\|_{L^p}\approx N^{(1/p-1/2)n}$, thus letting $N\to \nf$ we obtain
  that $|1/p-1/2|\le s/n$.
\end{proof}

We now prove
Lemma \ref{Sob}.
\begin{proof}[Proof of Lemma \ref{Sob}]
It is easy to verify that $\| \si_{N,\,\vec t\,}\|_{L^r}\le C$
and $\|\si_{N,\, \vec t\,}\|_{L^r_2}\le CN^2$.
Define for $z=u+iv$ and $\phi\in\mathscr S(\bbr^{n})$
the   function
$$
F(z)=\int_{\bbr^{n}}(I-\Delta)^{z}\si_{N,\,\vec t\,}(x)\phi(x)dx
$$
for $z$ in the closed unit strip. Then $F$ is analytic on the open strip and continuous on its closure.
 We can also show that by the Mihlin
multiplier theorem that
$$
|F(z)|\le P(|v|)\|\si_{N,\,\vec t}\|_{L^r_{2u}}\|\phi\|_{L^{r'}}\le P(|v|)N^{2u+n/r'}\|\vp\|_{L^{r}_{2u}}\|\phi\|_{L^{r'}},
$$
where $P(t)$ is a polynomial in $t$ which is not necessary to be the same at all occurrences.

We have then $\log|F(z)|\le \log(N^{2u+n/r'}\|\vp\|_{L^r_{2u}}\|\phi\|_{L^{r'}})+C\log|v|\le Ce^{\tau_0|v|}$
for some $\tau_0\in (0,1)$. Applying Lemma~\ref{lem:ThreeLines} we obtain for $0<s<1$
that
\begin{equation}\label{SN1}
\log |F(s)|\le \f{\sin(\pi s)}2\int_{-\nf}^{\nf} \bigg[\f{M_0(t)}{\cosh(\pi t)-\cos(\pi s)}+\f{M_1(t)}{\cosh(\pi t)+
\cos(\pi s)}\bigg]dt,
\end{equation}
where $\log |F(it)|\le M_0(t)=c\log |t|+\log \|\vp\|_{L^{r'}}$
and $\log |F(1+it)|\le M_1(t)=2\log N+c\log |t|+\log \|\vp\|_{L^{r'}}$.

We show that  \eqref{SN1} is controlled by $2s\log N+C(s)+\log \|\vp\|_{L^{r'}}$,
where
$C(s)$ is a finite constant depending on $s$ and independent on $N$. Then
$$
|F(s)|\le e^{2s\log N}e^{C(s)}e^{\log\|\vp\|_{L^{r'}}}\le C(s)N^{2s}\|\vp\|_{L^{r'}} , 
$$
i.e.
$|F(s/2)|\le C(s)N^s\|\phi\|_{L^{r'}}$ for all $\phi\in\mathcal S$, hence
$\|(I-\Delta)^{s/2}m\|_{L^r}\le C(s)N^s$ for $s\in (0,2)$. Note that the original
estimate
$\|\si_{N,\, \vec t}\|_{L^r_m}\le CN^m$ is valid for any positive integer $m$, so a similar argument gives the
estimate $\|\si_{N,\,\vec t}\|_{L^r_s}\le CN^s$ for all $s\ge0$.

It remains to control \eqref{SN1}, for which we recall \eqref{ide}.
So
\begin{align*}
\f{\sin(\pi s)}2\int_{-\nf}^{\nf} \bigg[\f{\log\|\vp\|_{L^{r'}}}{\cosh(\pi t)-\cos(\pi s)}+
\f{\log\|\vp\|_{L^{r'}}+2\log N}{\cosh(\pi t)+\cos(\pi s)}\bigg]\, dt  
=\log\|\vp\|_{L^{r'}}+2s\log N.
\end{align*}
So matters reduce to showing that for $0<s<1$ we have
$$
\int_{-\nf}^{\nf} \f{\log|t|}{\cosh(\pi t)-\cos(\pi s)}dt+\int_{-\nf}^{\nf} \f{\log|t|}{\cosh(\pi t)+\cos(\pi s)}dt<\nf
$$
which is a straightforward calculation.
 \end{proof}

\section{The endpoint case $\big|\f1p-\f12 \big|= \f sn$}

As another application of the interpolation  technique of this paper, we discuss an interpolation theorem  applicable in the critical case   
$\big|\f1p-\f12 \big|= \f sn$. 
We introduce the  Besov space   norm
$$
 \| h \|_{B_{p,q }^s}: = \bigg( \sum_{j\ge 1}   \big\| 2^{js} \De_j h \|_{\lp}^q \bigg)^{\f 1q} + \big\|S_0 h\big\|_{L^p}
$$
where $\De_j$ are the Littlewood-Paley operators and $S_0$ is an averaging operator that satisfy
$S_0+\sum_{j=1}^\nf \De_j  = I$. We assume that for $j\ge 1$, $\De_j$ have spectra supported in the annuli
$2^j \le |\xi | \le 2^{j+2}$, while $S_0$ has spectrum inside the ball $B(0,2)$.

We recall the following result of Seeger~\cite{Seeger} 
\beq\label{S1}
\big\| T_\si \big\|_{H^1\to L^{1,2}} \le C\, \sup_{k\in \mathbb Z}  \| \si (2^k \,\cdot\,) \wh\Psi \|_{B_{2}^{\f n2,1}}
\eeq
concerning the endpoint case $p=1$.
We also have the trivial estimate
\beq\label{S2}
\big\| T_\si \big\|_{L^2\to L^2}=\big\| T_\si \big\|_{L^2\to L^{2,2}} \le C\, \sup_{k\in \mathbb Z}  \| \si (2^k \,\cdot\,) \wh\Psi \|_{B_{ \nf }^{0,1}}\, .
\eeq

In this section,  we derive the intermediate estimate contained in Seeger~\cite{Seeger2}:
\beq\label{S3}
\big\| T_\si \big\|_{L^p\to L^{p,2}} \le C\, \sup_{k\in \mathbb Z}  \| \si (2^k \,\cdot\,) \wh\Psi \|_{B_{\f ns}^{s,1} }
\eeq
for $\big|\f1p-\f12 \big|= \f sn$,    $1<p<2$, and $0\le s\le \f n2$.
We deduce estimate \eqref{S3} from  the following theorem.
\begin{thm}\label{interpL2}
 Fix  $1< r_0,r_1\le \infty$, $1<p_0,p_1<\infty$, $0\le s_0,s_1<\infty$.
 Let $\wh{\Psi}$ be supported in the annulus $1/2\le |\xi|\le 2$ on $\mathbb R^n$ and satisfy
 $$
 \sum_{j\in \mathbb Z} \widehat{\Psi}(2^{-j}\xi) = 1,  \qq\xi\neq 0.
 $$
 Assume that for $k\in\set{0,1}$ we have
  \begin{equation}\label{Assump}
    \norm{T_\sigma(f)}_{L^{p_k,2}}\le K_k\sup_{j\in\mathbb Z}\big\|{\sigma(2^j\cdot)\widehat{\Psi}}\big\|_{B^{s_k,1}_{ r_k}}
    \norm{f}_{L^{p_k }}
  \end{equation}
  for   all $f\in \mathscr C_0^\nf(\mathbb R^n)$ and $\sigma$ which make the right hand side finite.
For $0<\theta<1$ define
  $$
  \frac 1p  =\frac{1-\theta}{p_0} + \frac{ \theta}{p_1} , \quad
       \frac 1r =\frac{1-\theta}{r_0} + \frac{ \theta}{r_1} , \quad
          s =(1-\theta)s_0 +   \theta s_1  .
  $$
Then  there is  a constant $C_*= C_*( r_0,r_1,s_0,s_1, p_0,p_1, p,n) $ such that for all $f$ in $ \mathscr C_0^\nf(\mathbb R^n)$
 we have
  \begin{equation}\label{Conc}
    \norm{T_\sigma(f)}_{L^{p,2}(\rn)}\le C_* K_0^{1-\theta} K_1^{\theta} \sup_{j\in\mathbb Z}
    \big\|{\sigma(2^j\cdot)\widehat{\Psi}}\big\|_{B^{s,1}_{ r}}
    \norm{f}_{L^{p}(\rn)}.
  \end{equation}
Moreover,     conclusion \eqref{Conc} also holds under the assumption that $p_0=1$ and \eqref{Assump}  is
substituted (only for    $k=0$) by
  \begin{equation}\label{Assump2}
    \norm{T_\sigma(f)}_{L^{1,2}}\le K_0\sup_{j\in\mathbb Z}\big\|{\sigma(2^j\cdot)\widehat{\Psi}}\big\|_{B^{s_0,1}_{r_0}}
    \norm{f}_{H^1}
  \end{equation}
for all $f\in \mathscr C_0^\nf(\mathbb R^n)$ with vanishing integral.
\end{thm}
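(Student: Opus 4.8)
The plan is to re-run the analytic‑interpolation machinery of Section~2 around three analytic families: a family of multipliers $\{\si_z\}$ interpolating the Besov norms on dyadic annuli, a family $\{f_z\}$ interpolating the domain spaces, and a family $\{g_z\}$ interpolating the predual of the target Lorentz spaces. Given these, I would set $F(z)=\int_{\rn} T_{\si_z}(f_z)\,g_z\,dx$, check by an argument as in Lemma~\ref{lem:analint} that $F$ is analytic on $S$, continuous up to $\partial S$, and satisfies the growth bound required by Lemma~\ref{lem:ThreeLines} (in fact $F$ is bounded on $S$, since the moduli of every complex power occurring are independent of $\textup{Im}(z)$), estimate $|F(it)|$ and $|F(1+it)|$ by combining hypothesis \eqref{Assump} with H\"older's inequality for Lorentz spaces, $\big|\int hg\big|\le C\|h\|_{L^{p_k,2}}\|g\|_{L^{p_k',2}}$, apply Lemma~\ref{lem:ThreeLines} at $z=\theta$, let $\ve\to0$, and take the supremum over $g$ with $\|g\|_{L^{p',2}}\le1$ using $(L^{p,2})^*=L^{p',2}$.

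For $\{\si_z\}$ I would use that $B^{s,1}_r$ is a retract of $\ell^1_s(L^r)$: with $m_j=\si(2^j\cdot)\wh\Psi$, $m_j=\sum_l\De_l m_j$, $\alpha(z)=r\big(\tf{1-z}{r_0}+\tf z{r_1}\big)$, and $A^{(j)}_l=2^{l(s_1-s_0)}\|\De_l m_j\|_{L^r}^{\,r(1/r_1-1/r_0)}$, put
$$
\si_z(\xi)=\sum_{j}\Big(\sum_l \big(A^{(j)}_l\big)^{\theta-z}\,\wt\De_l\big[\,|\De_l m_j|^{\alpha(z)}e^{i\,\textup{Arg}(\De_l m_j)}\,\big]\Big)(2^{-j}\xi)\,\wh\Phi(2^{-j}\xi),
$$
where $\wt\De_l$ is a fattened Littlewood--Paley projection with $\wt\De_l\De_l=\De_l$ and $\wh\Phi\equiv1$ on $\supp\wh\Psi$. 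Since $\alpha(\theta)=1$ one gets $\si_\theta=\si$. The reason one cannot simply copy the family of Theorem~\ref{interpL} is that $|\De_l m_j|^{\alpha(z)}e^{i\textup{Arg}}$ is no longer frequency localized; but $\wt\De_l$ re-localizes it to frequencies of size $\sim 2^l$, and then, using $\big\|\,|g|^{r/r_\tau}\big\|_{L^{r_\tau}}=\|g\|_{L^r}^{r/r_\tau}$ together with $\De_{l'}\wt\De_l=0$ for $|l-l'|$ large, a direct computation gives the \emph{uniform} bound $\|\si_z(2^k\cdot)\wh\Psi\|_{B^{s_\tau,1}_{r_\tau}}\le C\sup_j\|m_j\|_{B^{s,1}_r}$ for all $z=\tau+it$, with $s_\tau=(1-\tau)s_0+\tau s_1$, $1/r_\tau=(1-\tau)/r_0+\tau/r_1$. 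For $\tau=0,1$ this is the control \eqref{Assump} asks for; and since $s_\tau\ge n/r_\tau$ in the cases relevant here ($s_0=n/r_0$, $s_1=n/r_1$), the borderline embedding $B^{s_\tau,1}_{r_\tau}\hookrightarrow\li$ puts $\si_z$ in $\li$ uniformly, so $T_{\si_z}$ acts on $L^2$ and $F$ is well defined on $S$. The family $\{f_z\}$ comes from Lemma~\ref{lem:016} (with $p_0<p<p_1$). The family $\{g_z\}$ requires the Lorentz analogue of Lemma~\ref{lem:016}: an analytic family with $g^\ve_\theta\to g$ in $L^2$, $\|g^\ve_{it}\|_{L^{p_0',2}}\lesssim\|g\|_{L^{p',2}}^{p'/p_0'}$, $\|g^\ve_{1+it}\|_{L^{p_1',2}}\lesssim\|g\|_{L^{p',2}}^{p'/p_1'}$, the exponents chosen (via $1/p'=(1-\theta)/p_0'+\theta/p_1'$) so that the three-lines lemma recombines the $g$-factors exactly into $\|g\|_{L^{p',2}}$; since the $L^{p',2}$ quasi-norm, unlike $L^{p'}$, does not split over disjoint supports, this family must be built from a decomposition of $g$ along its level sets — this is the complex interpolation of Lorentz spaces, which I would invoke or reprove in the manner of Lemma~\ref{lem:016}.

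With the three families the boundary estimates read $|F(it)|\le C\|T_{\si_{it}}f^\ve_{it}\|_{L^{p_0,2}}\|g^\ve_{it}\|_{L^{p_0',2}}\le C'K_0\sup_j\|m_j\|_{B^{s,1}_r}(\|f\|_{L^p}^p+\ve')^{1/p_0}(\|g\|_{L^{p',2}}^{p'}+\ve')^{1/p_0'}$ and symmetrically on $\Re z=1$ with $K_1$ and exponents $1/p_1,1/p_1'$; Lemma~\ref{lem:ThreeLines} followed by $\ve\to0$ gives $\big|\int T_\si f\,g\,dx\big|\le C_*K_0^{1-\theta}K_1^\theta\sup_j\|m_j\|_{B^{s,1}_r}\|f\|_{L^p}\|g\|_{L^{p',2}}$, and duality yields \eqref{Conc}. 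For the ``Moreover'' statement ($p_0=1$, \eqref{Assump2}) only the two families on the line $\Re z=0$ change; $\{\si_z\}$ and the line $\Re z=1$ stay as above. On the dual side $p_0'=\infty$, and one pairs via $\big|\int hg\big|\le C\|h\|_{L^{1,2}}\|g\|_{L^{\infty,2}}$, using the degenerate case of the Lorentz family, which furnishes $g_{it}$ bounded in $L^{\infty,2}$, consistent with $[L^{\infty,2},L^{p_1',2}]_\theta=L^{p',2}$. On the domain side one needs an analytic family with $f_\theta\to f$, $\|f_{it}\|_{H^1}\lesssim\|f\|_{L^p}^p$ (since $p/p_0=p$), and $\|f_{1+it}\|_{L^{p_1}}\lesssim\|f\|_{L^p}^{p/p_1}$: formally, if $f=\sum_j c_j\phi_j$ with each $\phi_j$ of vanishing integral and $\|\phi_j\|_{\li}\lesssim1$ on a cube $Q_j$, so that $\phi_j/|Q_j|$ is an $H^1$-atom, then $f_z=\sum_j|c_j|^{p(1-z)+pz/p_1}(\textup{phase})\,\phi_j$ gives $\|f_{it}\|_{H^1}\le\sum_j|c_j|^p\|\phi_j\|_{H^1}\lesssim\sum_j|c_j|^p|Q_j|$ and $\|f_{1+it}\|_{L^{p_1}}^{p_1}\lesssim\sum_j|c_j|^p|Q_j|$; then \eqref{Assump2} combined with the uniform Besov bound on $\si_{it}$ produces the $\Re z=0$ estimate, and the argument closes as before.

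The step I expect to be the real obstacle is exactly this last construction. One must arrange a decomposition $f\approx\sum_j c_j\phi_j$ of a general $f\in\mathscr C_0^\infty$ with $\int f=0$ into vanishing-integral atoms such that $\sum_j|c_j|^p|Q_j|\lesssim\|f\|_{L^p}^p$ \emph{while} simultaneously approximating $f$ in $L^2$ and in $L^{p_1}$ — in other words an explicit form of the complex interpolation $[H^1,L^{p_1}]_\theta=L^p$ with the conclusion expressed through $\|f\|_{L^p}^{\,p}$ rather than $\|f\|_{H^1}$ — and making this rigorous genuinely requires the atomic/wavelet (tent-space) machinery for $H^1$, so this part is no longer ``bare hands.'' A lesser technical point is the Lorentz analogue of Lemma~\ref{lem:016}, harder than the Lebesgue version precisely because the $L^{p',2}$ quasi-norm does not split over disjoint supports, but that is a routine, if slightly involved, matter.
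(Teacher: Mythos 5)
Your construction of the analytic family of multipliers is essentially the paper's: the paper defines $\si_z$ from the pieces $\De_j(\si^k\wh\Psi)$ with exponents $L(z)=\tfrac r{r_0}(1-z)+\tfrac r{r_1}z$, weights $2^{jM(z)}(c_j^k)^{1-L(z)}$, and a re-localization by fattened Littlewood--Paley operators $\wt\De_j$, and then proves the uniform boundary Besov bounds (your inequality $\|\si_z(2^k\cdot)\wh\Psi\|_{B^{s_m,1}_{r_m}}\lesssim(\sup_j\|m_j\|_{B^{s,1}_r})^{r/r_m}$ on $\Re z=m$) via a kernel estimate exactly in the spirit of your $\wt\De_l\big(\wt\De_j(\cdot)\wh\Psi\wh{\wt\Psi}\big)$ decay (Lemma~\ref{lemlast}). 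Where the two arguments part ways is in how the interpolation is closed, and this is where your proposal has a genuine gap that you yourself flag. The paper does \emph{not} build function families $f_z$, $g_z$ and run Lemma~\ref{lem:ThreeLines} with duality; it applies the abstract theorem on analytic families of operators of Grafakos--Masty\l o (\cite{grafakosmastylo}, Theorem 1.1 with $m=1$) to the family $T_{\si_z}$, using the identifications $(L^{p_0,2})^{1-\theta}(L^{p_1,2})^\theta=L^{p,2}$ and $(L^{p_0},L^{p_1})_\theta=L^p$, with $H^1$ in place of $L^{p_0}$ at the endpoint. That single citation is precisely what replaces the two ingredients you leave unconstructed: a Lorentz analogue of Lemma~\ref{lem:016} on the dual side, and an explicit analytic family realizing $[H^1,L^{p_1}]_\theta=L^p$ with $\|f_{it}\|_{H^1}\lesssim\|f\|_{L^p}^p$. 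Without either supplying these constructions (the $H^1$ one amounts to reproving Fefferman--Rivi\`ere--Sagher/Calder\'on--Torchinsky type interpolation with $H^1$ endpoint, which is not a routine add-on) or invoking such an abstract result, the proof is not complete.

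A further concrete defect in your endpoint scheme: the pairing $\big|\int hg\big|\le C\|h\|_{L^{1,2}}\|g\|_{L^{\infty,2}}$ and the claim $[L^{\infty,2},L^{p_1',2}]_\theta=L^{p',2}$ cannot be used as stated. With the paper's definition ($t^{1/p}f^*(t)\in L^2(dt/t)$) the space $L^{\infty,2}$ is trivial, since $\int_0^1 f^*(t)^2\,\tfrac{dt}{t}$ diverges unless $f=0$; moreover $L^{1,2}$ is a non-normable quasi-Banach space, so the duality-plus-three-lines mechanism you rely on breaks down exactly at the $p_0=1$ line. This is an additional reason the paper routes the endpoint case through the operator-valued interpolation theorem (which is formulated for quasi-Banach Lorentz targets and an $H^1$ source) rather than through a bare-hands pairing argument. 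In short: first half of your proposal matches the paper; the second half, as you concede, is an outline whose hardest steps are missing, and at $p_0=1$ the proposed duality route is not merely hard but unavailable in the form written.
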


\begin{proof}
Let $\wh{\Phi}(\xi ) = \sum_{j\le 0}  \widehat{\Psi}(2^{-j}\xi)$ and $\wh{\Phi}(0)=1$; then
$\wh{\Phi} $ is supported in $|\xi|\le 2$.
Fix a bounded function $\si$. For an integer $k$ define the dilation of $\si^k$ by setting $\si^k(\xi) = \si(2^k \xi)$.
For $z$ in the closed unit strip we introduce linear functions
$$
L(z) = \f{r}{r_0} (1-z) +\f r {r_1} z , \qq M(z) = s- (1-z)s_0-zs_1
$$
and  when $j\ge 1$  introduce
Littlewood-Paley operators $\De_j (g) = g* \Psi_{2^{-j}}$, $\wt{\De}_j  (g) = g* \wt{\Psi}_{2^{-j}}$,   where
$\wt \Psi$ is a Schwartz function whose Fourier transform is supported in an annulus only slightly larger than
$1/2\le |\xi|\le 2$ and equals $1$ on the support of $\wh{\Psi}$.
We also define $\De_0(g) = g*\Phi$ and $\wt{\De}_0(g) = g*\wt{\Phi}$, where  the Fourier transform
of $\wt{\Phi}$ is supported in $|\xi|\le 4$ and equals $1$ on the support
of $\wh{\Phi}$. Then define:
 
$$
\si_z = \sum_{k\in \mathbb Z} \sum_{j= 0}^\infty 2^{j M(z)} (c_j^k)^{1-L(z)} \,\, \wt{\De}_j \Big(
\big| \De_j (\si^k \wh{\Psi} ) \big|^{L(z)} e^{i \textup{Arg} \big( \De_j (\si^k \wh{\Psi} )  \big)} \Big) (2^{-k} \cdot )
\wh{\wt{\Psi}}  (2^{-k} \cdot )
$$
where
$$
c_j^k = \f{\big\| \De_j (\si^k \wh{\Psi}) \big\|_{L^r} }{
\sup\limits_{\mu\in \mathbb Z} \sum\limits_{l\ge 0}  2^{ls}\big\| \De_l  (\si^\mu \wh{\Psi}) \big\|_{L^r}} \, .
$$
Next, we estimate
\begin{equation}\label{nhji}
\sup_{\mu\in \mathbb Z} \sum_{l \ge 0} 2^{ls_0} \big\| \De_l \big(\si_{it}^\mu \wh{\Psi} \big)\big\|_{L^{r_0}}\, .
\end{equation}

We notice that for a given $\mu\in \mathbb Z$, in the sum defining $\si_{it}^\mu$, only finitely many terms in $k$ appear,
the ones with $k=\mu, \mu+1,\mu-1$. For simplicity we only consider the term with $k =\mu$, since the other ones are similar. 
This part of \eqref{nhji} is estimated by
\begin{equation}\label{nhji2}
\sup_{\mu\in \mathbb Z} \sum_{l \ge 0} \sum_{j= 0}^N  2^{ls_0} 2^{j(s-s_0)}
|c_j^\mu|^{1-\f r{r_0}} \bigg\| \wt{\De}_l\bigg(
\wt{\De}_j \Big(
\big| \De_j (\si^\mu \wh{\Psi} ) \big|^{L(it)} e^{i \textup{Arg} \big( \De_j (\si^\mu \wh{\Psi} )  \big)} \Big) \wh{\Psi} \wh{\wt{\Psi}}  \bigg)
\bigg\|_{L^{r_0}}\, .
\end{equation}
Using Lemma~\ref{lemlast} (stated and proved below) we obtain that \eqref{nhji2} is bounded by
\begin{equation}\label{nhji3}
\sup_{\mu\in \mathbb Z} \sum_{l \ge 0} \sum_{j= 0}^N  2^{ls_0} 2^{j(s-s_0)}
|c_j^\mu|^{1-\f r{r_0}} C_M 2^{-2|1-\f 1{r_0}|\max(j,\, l)M}   \big\| \,  \big|
  \De_j (\si^\mu \wh{\Psi} ) \big|^{\f{r}{r_0}}  \big\|_{L^{r_0}}\, .
\end{equation}
But the sum over $l$ in \eqref{nhji3} is bounded by  $C_M 2^{js_0} 2^{-j2|1-\f 1{r_0}|M} \le C_M 2^{js_0}$ for $M$ sufficiently large, and consequently
\eqref{nhji3} is bounded by
\begin{equation}\label{nhji4}
C_M \sup_{\mu\in \mathbb Z}  
\sum_{j= 0}^\infty    \! 2^{j(s-s_0)} 2^{js_0}
|c_j^\mu|^{1-\f r{r_0}}     \big\| \De_j (\si^\mu \wh{\Psi} )   \big\|_{L^{r}}^{\f{r}{r_0}}
 \le C_M
 \bigg(\! \sup\limits_{\mu\in \mathbb Z} \sum\limits_{j\ge 0}\!  2^{js}\big\| \De_j  (\si^\mu \wh{\Psi}) \big\|_{L^r}\!\!\bigg)^{\!\!\f{r}{r_0}}
\end{equation}
by the choice of $c_j^\mu$.
Likewise we obtain a similar estimate for the point $1+it$. We summarize these two estimates   as follows:
\begin{equation}\label{nhji5}
\sup_{\mu\in \mathbb Z} \sum_{l \ge 0} 2^{ls_m} \big\| \De_l \big(\si_{z}^\mu \wh{\Psi} \big)\big\|_{L^{r_m}}
 \le C_M
 \bigg(\! \sup\limits_{k\in \mathbb Z} \sum\limits_{j\ge 0}  2^{js}\big\| \De_j  (\si^k \wh{\Psi}) \big\|_{L^r}\bigg)^{\!\!\f{r}{r_m}}
\end{equation}
where $m=0,1$ and $\Re z= m$.

Now consider an analytic family of operators $T_z$  associated with the multipliers $\sigma_z $  defined by 
$f\mapsto T_{\sigma_z}(f)$.
We have that when $\real z=0$, $T_z$ maps $L^{p_0,2}$ to $L^{p_0}$ if $p_0>1$ and 
$H^1$ to $L^{1,2}$   if $p_0=1$ 
with constant
$$
B_0= C_M K_0\bigg( \sup\limits_{k\in \mathbb Z} \sum\limits_{j\ge 0}  2^{js}\big\| \De_j  (\si^k \wh{\Psi}) \big\|_{L^r}\bigg)^{\f{r}{r_0}}
$$
and   when $\real z=1$, $T_z$  maps $L^{p_1,2}$ to $L^{p_1}$ with constant
$$
B_1=C_M K_1 \bigg( \sup\limits_{k\in \mathbb Z} \sum\limits_{j\ge 0}  2^{js}\big\| \De_j  (\si^k \wh{\Psi}) \big\|_{L^r}\bigg)^{\f{r}{r_1}}\, . 
$$
We now interpolate using Theorem 1.1 (with $m=1$) in \cite{grafakosmastylo}. We obtain 
$$
\| T_{\si_\theta}(f) \|_{ ( L^{p_0,2})^{1-\theta} (L^{p_1,2})^\theta} \le C(p_0,p_1,p)   B_0^{1-\theta} B_1^{\theta} 
\| f \|_{( L^{p_0 }, L^{p_1 })_\theta} \, . 
$$
Noting that $( L^{p_0,2})^{1-\theta} (L^{p_1,2})^\theta= L^{p,2}$ and $( L^{p_0 }, L^{p_1 })_\theta =L^p$ (even when $p_0=1$, in which case $L^{p_0}$ is  replaced by $H^1$), we obtain the claimed assertion. 
\end{proof}

\begin{lem}\label{lemlast}
Using the notation of Theorem~\ref{interpL2}, for any $M>0$ there is a constant $C_M$ (also depending on the dimension $n$, on $\Psi$, and $\wt\Psi$) such that for any $1\le q\le \nf$ we have
\beq\label{ppook}
\Big\| \wt{\De}_l\big(    \wt{\De}_j \big(  g \big) \wh{\Psi} \wh{\wt{\Psi}}  \big) \Big\|_{L^q } \le C_M 
{ 2^{-2(1-\f 1q)
\max(j,\, l)M}} \|g\|_{L^q}
\eeq
for all $l,j> 0$.  We also have that for any $M>n$ there is a constant $C_M$ such that
\beq\label{ppH1}
\Big\| \wt{\De}_l\big(    \wt{\De}_j \big(  g \big) \wh{\Psi} \wh{\wt{\Psi}}  \big) \Big\|_{L^1 } \le C_M 2^{- \max(j,\, l)(M-n)} \|g\|_{H^1}
\eeq
\end{lem}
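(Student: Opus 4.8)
The plan is to realize the map $g\mapsto\wt\De_l\big(\wt\De_j(g)\,\wh\Psi\wh{\wt\Psi}\big)$ as an integral operator and to estimate its kernel. Since $\wh{\wt\Psi}\equiv1$ on $\supp\wh\Psi$, the function $\Theta:=\wh\Psi\wh{\wt\Psi}=\wh\Psi$ is smooth, supported in $\{1/2\le|\xi|\le2\}$, and satisfies $\|\p^\al\Theta\|_{\li}\le C_\al$. Writing $\wt\De_m(g)=g*\wt\Psi_{2^{-m}}$, one has
\[
\wt\De_l\big(\wt\De_j(g)\,\Theta\big)(u)=\int_{\rn}K(u,y)\,g(y)\,dy,\qquad
K(u,y)=\int_{\rn}\wt\Psi_{2^{-l}}(u-x)\,\Theta(x)\,\wt\Psi_{2^{-j}}(x-y)\,dx ,
\]
so by Schur's test $\|\wt\De_l(\wt\De_j(\cdot)\Theta)\|_{L^q\to L^q}\le\big(\sup_y\int|K(u,y)|\,du\big)^{1/q}\big(\sup_u\int|K(u,y)|\,dy\big)^{1/q'}$, and the whole matter reduces to a pointwise bound on $K$.

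For that bound I would exploit that $\wt\Psi$ — hence every dilate $\wt\Psi_{2^{-m}}$ — has vanishing moments of every order, because $\wh{\wt\Psi}$ vanishes near the origin. Assume first $j\ge l$: then $\wt\Psi_{2^{-j}}$ oscillates on the fine scale $2^{-j}$, while $x\mapsto\wt\Psi_{2^{-l}}(u-x)\Theta(x)$ varies on the coarse scale $2^{-l}$ and, thanks to $\Theta$ and the rapid decay of $\wt\Psi_{2^{-j}}$, $K(u,y)$ is negligible unless $|u|\approx|y|\approx1$. Taylor-expanding $\wt\Psi_{2^{-l}}(u-\cdot)\Theta(\cdot)$ at $y$ to order $M$, the polynomial is killed by $\int w^\al\wt\Psi_{2^{-j}}(w)\,dw=0$ for $|\al|\le M$, and the remainder paired with $\wt\Psi_{2^{-j}}$ is bounded by $\int|w|^{M+1}|\wt\Psi_{2^{-j}}(w)|\,dw=C_M2^{-j(M+1)}$ times $\|\p_x^{M+1}[\wt\Psi_{2^{-l}}(u-x)\Theta(x)]\|_{\li}\lesssim_M2^{l(n+M+1)}(1+2^l|u-y|)^{-N}$ on the relevant region (all $M+1$ derivatives falling on $\wt\Psi_{2^{-l}}$ give the dominant term). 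Hence
\[
|K(u,y)|\lesssim_{M,N}2^{-(M+1)(j-l)}\,2^{ln}\,(1+2^l|u-y|)^{-N}\,\chi_{\{|y|\approx1\}} ,
\]
with the symmetric estimate (cancellation now taken from $\wt\Psi_{2^{-l}}$) when $l>j$; integrating $2^{ln}(1+2^l|u-y|)^{-N}$ in $u$ or in $y$ gives $O(1)$, so both Schur quantities are $\lesssim_M2^{-(M+1)|j-l|}$. When the Littlewood--Paley bands $\{|\xi|\approx2^j\}$ and $\{|\xi|\approx2^l\}$ are in fact disjoint (i.e. $|j-l|$ exceeds a constant depending only on $\Psi$) one gets more: on the Fourier side $\wh{K(\cdot,y)}(\zeta)$ is supported in $|\zeta|\approx2^l$ and equals a product in which $\cf\Theta=\Psi(-\cdot)$ is evaluated at points of size $\approx2^{\max(j,l)}$, so its rapid decay forces $\|\wt\De_l(\wt\De_j(\cdot)\Theta)\|_{L^q\to L^q}\lesssim_N2^{-N\max(j,l)}$ for every $N$; combined with the trivial bound $\lesssim1$ (boundedness of each of the three factors) for bounded $|j-l|$, this gives \eqref{ppook} for any prescribed $M$, and in particular the majorant needed for the double sums \eqref{nhji3}--\eqref{nhji4}.

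For \eqref{ppH1} I would first use an atomic decomposition $g=\sum_i\la_i a_i$ with $\sum_i|\la_i|\approx\|g\|_{H^1}$, reducing to a single $H^1$-atom $a$ supported in a ball $B=B(x_0,\rho)$ with $\|a\|_{\li}\le|B|^{-1}$ and $\int a=0$. One then reruns the kernel computation, now additionally spending the cancellation $\int a=0$: expanding $\wt\Psi_{2^{-j}}(x-\cdot)$ about $x_0$ produces an extra gain for atoms finer than $2^{-\min(j,l)}$, while the tails of $\wt\Psi_{2^{-j}}$ and $\wt\Psi_{2^{-l}}$ reach only distance $O(\rho+2^{-\min(j,l)})$ from $B$; the measure of that region is exactly what forces the hypothesis $M>n$, so that the resulting $y$-integrals and the sum over the atomic tails converge. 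Summing over $i$ then yields \eqref{ppH1}.

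The hard part is the bookkeeping in the kernel estimate. One must (i) assign the moment cancellation to the more oscillatory of $\wt\Psi_{2^{-j}},\wt\Psi_{2^{-l}}$, which forces a clean split into $j\ge l$ and $l>j$; (ii) track the Schwartz spatial decay so that the factor $2^{\min(j,l)\,n}$ produced when all $M+1$ derivatives land on the coarser dilate is exactly absorbed by the $2^{-\min(j,l)\,n}$ measure of the region where $K$ is non-negligible, leaving a clean gain with no residual loss; and (iii) in the $H^1$ case, interleave the atom's cancellation with the frequency-gap cancellation without double-counting scales, and then stitch the ``disjoint bands'' bound to the ``bounded gap'' bound so that the final estimate is uniform over all $j,l>0$.
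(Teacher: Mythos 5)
Your treatment of \eqref{ppook} is a genuinely different route from the paper's, and it is essentially workable. The paper disposes of \eqref{ppook} in a few lines: the bound is trivial for $q=1$; for $q=2$ it passes to the frequency side, where the relevant kernel is $\wh{\wt\Psi}(2^{-l}\zeta)\,\wh\Theta(\zeta-\eta)\,\wh{\wt\Psi}(2^{-j}\eta)$ and the rapid decay of the Schwartz function $\wh\Theta$ evaluated at $|\zeta-\eta|\approx 2^{\max(j,l)}$ (when the two annuli are separated) gives arbitrarily fast decay; Riesz--Thorin then produces the exponent $2(1-\f1q)$. Your spatial kernel-plus-Schur argument with the moment cancellation of $\wt\Psi_{2^{-j}}$ is correct as far as it goes, but note what it actually proves: decay $2^{-(M+1)|j-l|}$, not decay in $\max(j,l)$. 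The $\max(j,l)$ gain comes only from your supplementary ``disjoint bands'' observation, and there you assert rather than prove the passage from a frequency-side pointwise bound to an $L^q\to L^q$ bound for all $q$; this is true (e.g.\ split $\Theta$ into a piece with Fourier support in $|\zeta|\le 2^{\max(j,l)-5}$, which is annihilated by the band separation, plus a remainder all of whose norms are $O_N(2^{-N\max(j,l)})$, or interpolate the $q=2$ bound with the trivial $q=1,\infty$ bounds as the paper does), but it needs saying. Also, your final assembly (``trivial bound for bounded $|j-l|$'') cannot literally yield \eqref{ppook} when $j\approx l$ are both large: no decay in $\max(j,l)$ is available there (test on a modulated bump at frequency $\approx 2^j$), and the paper's own statement and proof are loose at exactly the same place; what both arguments really deliver, and what the application \eqref{nhji2}--\eqref{nhji4} uses, is the off-diagonal decay, which your Schur estimate gives directly for every $q$ without any interpolation. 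So for the first half your proof is an acceptable, more elementary alternative, provided the separated-band case is written out honestly.

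The $H^1$ estimate \eqref{ppH1} is where the genuine gap lies. In your atomic-decomposition plan the only quantified source of smallness is the cancellation of the atom, which yields gains of the atom's radius against the scale $2^{-\min(j,l)}$ --- i.e.\ summability over atoms and scales --- but not the factor $2^{-\max(j,l)(M-n)}$ that \eqref{ppH1} asserts; that factor can only come from the separation of the two Littlewood--Paley bands, and your sketch never extracts it (your closing list of ``hard parts'' in effect concedes that the interleaving of the two cancellations is unresolved). The paper's proof needs none of this machinery: it uses only $|\wh g(\xi)|\le c\|g\|_{H^1}$, Fourier inversion, integration by parts in $\xi$ to manufacture the factor $(1+4\pi^2|x|^2)^{-N}$, and then the elementary bound $2^{ln}\sup_{|\eta|\approx 2^l}\int_{|\xi|\approx 2^j}(1+|\xi-\eta|)^{-2M}\,d\xi\lesssim 2^{-\max(j,l)(M-n)}$, which is precisely where the hypothesis $M>n$ enters --- not, as you suggest, through the measure of the atoms' tail region. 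Either redo this part along those lines, or, if you keep atoms, identify explicitly where the band-separation decay is harvested.
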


\begin{proof}
The claimed estimate is obviously true when $q=1$. So we prove it for $q=2$ and derive \eqref{ppook} as
a consequence of classical Riesz-Thorin interpolation theorem. Examining the Fourier transform of the operator in
\eqref{ppook}, matters reduce to computing the $\li$ norm of the function
\beq\label{ppook2}
\wh{\wt \Psi}(2^{-j } \xi) \int_{\mathbb R^n} \wh{\wt\Psi}(2^{-l } (\xi-\eta) ) \phi(\eta) d\eta
\eeq
where $\phi(\eta) = \Psi * \wt\Psi$ is a Schwartz function. Since the integral is over the set $ |\xi-\cdot| \approx 2^l$, we estimate the absolute value of the expression in \eqref{ppook2} by
$$
C_M\Big[\sup\Big\{ \f{1}{(1+|\eta|)^M}:\,\,    |\xi-\eta|   \approx 2^l
\Big\} \Big] \,\, \int_{\mathbb R^n} (1+|\eta|)^{-M}d\eta
$$
where $  |\xi| \approx   2^j$. Notice that if $l> j+10$, then $|\eta|\approx 2^l$, while if $j>l+10$, then
$|\eta|\approx 2^j$. These estimates yield the proof of \eqref{ppook}.

We now turn our attention to \eqref{ppH1}. Using Fourier inversion, we write
$$
\wt{\De}_l\big(    \wt{\De}_j \big(  g \big) \wh{\phi}  \, \big)(x) =
\int_{\mathbb R^n} \wh{g}(\eta) \wh{\wt{\Psi}}(2^{-l}\eta)\int_{\mathbb R^n}
\wh{\wt{\Psi}}(2^{-j}\xi) \phi(\xi-\eta) e^{2\pi i x\cdot \xi} \, d\xi \, d\eta \, .
$$
We integrate by parts in the inner integral with respect to the operator $(I-\De_\xi)^{N}$ to obtain that the
preceding expression is equal to
$$
\sum_{\be+\ga=2N}\f{C_{\be,\ga} }{(1+4\pi^2|x|^2)^N}
\int_{\mathbb R^n} \wh{g}(\eta) \wh{\wt{\Psi}}(2^{-l}\eta)\int_{\mathbb R^n}
2^{-j|\be|} (\p^\be\wh{\wt{\Psi}})(2^{-j}\xi) (\p^\ga \phi)(\xi-\eta) e^{2\pi i x\cdot \xi} \, d\xi \, d\eta \, .
$$
Since for $g\in H^1$ we have $|\wh{g} (\xi) | \le c \|g\|_{H^1}$ for all $\xi$ and we deduce the estimate
$$
\big| \wt{\De}_l\big(    \wt{\De}_j \big(  g \big) \wh{\phi}  \, \big)(x) \big| \le
 \f{  C_M  \|g\|_{H^1} }{(1+4\pi^2|x|^2)^N} 2^{ln} \sup_{|\eta| \approx 2^l}
\int_{|\xi| \approx 2^j} \f{d\xi}{ (1+|\xi-\eta|)^{2M}}
$$
for $M>n$. We easily derive from this estimate the validity of \eqref{ppH1}. Note that in the case $j=0$   the notation 
$|\xi| \approx 2^j$ should be interpreted as $|\xi| \lesssim 2$; likewise when $l=0$.
\end{proof}

Note that Proposition~\ref{KKLLMM} is a consequence   of Theorem~\ref{interpL2} with initial   estimates \eqref{S1} and \eqref{S2}.

\medskip

\noindent{\bf Acknowledgment:} The first author would like to thank Andreas Seeger for    useful   discussions.

 \end{document}